\numberwithin{equation}{section}
\theoremstyle{plain}
\newtheorem{lemma}{Lemma}
\begin{document}
\title{Exceptional Spin Groups on Hyperelliptic Riemann Surfaces}
\author{K.~M.Bugajska}
\address{Department of Mathematics ans Statistics,
York University,
Toronto,ON, M3J 1P3,
Canada}
\begin{abstract}
We find all exceptional spin groups  attached to the vertices of any exceptional spin-graph on any hyperbolic Riemann surface $\Sigma$ of genus $g\geq{2}$.  In particular we show that when the order $r$ of a graph is $r\geq{3}$ (i.e. the genus of $\Sigma$ must be $g\geq{4}$) then the  spin group attached to an exceptional point $Q$ is either isomorphic to the symmetry group $\mathtt{S}_{r}$ (when ${deg_{\epsilon}Q}=r$) or to the symmetry group $\mathtt{S}_{r+1}$ (when ${deg_{\epsilon}Q}={r+1}$).
\end{abstract}

\maketitle

\section{INTRODUCTION}
In this paper we will consider only a hyperelliptic Riemann surface $\Sigma$ of any genus $g>1$. Let $\xi_{\epsilon}$ be an even, nonsingular spin bundle on  $\Sigma$.  For any point $P\in{\Sigma}$ there exists unique meromorphic section $\sigma_{P}$ of $\xi_{\epsilon}$ with a single simple pole at $P$.  The zero divisor of $\sigma_{P}$ is an integral divisor $\mathcal{A}^{\epsilon}_{P}$ of degree $g$ with vanishing index of specialty ~\cite{RM95}, ~\cite{DV11}, ~\cite{RG67}.

It was shown  ~\cite{KB13} that the spin bundle $\xi_{\epsilon}$ determines an $\epsilon$-foliation of $\Sigma$ whose each leaf carries an additional structure of a spin graph . Almost all leaves of this foliation have $2g+2$ points  that form the vertices of a standard graph.  Besides standard leaves there are two leaves through Weierstrass points (corresponding to two Weierstrass graphs with $g+1$ vertices each) and, necessary, some finite number of exceptional leaves.

Let $P$ be a standard or a Weierstrass point of $\Sigma$ and let $\mathcal{S}^{\epsilon}_{P}$ denote the spin graph through $P$. In this case the divisor $\mathcal{A}^{\epsilon}_{P}$ consists of $g$ distinct points, say, ${\mathcal{A}^{\epsilon}_{P}}=P_{1}P_{2}{\ldots}P_{g}$ and ${\widetilde{P}}\neq{P_{l}}$ for any $l=1,2,\ldots,g$.  It is important that any concrete enumeration of the points of the set $\{\mathcal{A}^{\epsilon}_{P}\}$ determines unique enumeration of the points of $\{\mathcal{A}^{\epsilon}_{Q}\}$ for any other vertex $Q$ of the spin graph $\mathcal{S}^{\epsilon}_{P}$.

It was shown  in ~\cite{KMB13} that a spin-graph structure of  any non-exceptional leaf of the $\epsilon$-foliation implies that at any non-exceptional  point $P\in{\Sigma}$ we may attach the spin group $\mathtt{G}_{P}$.  Each such group is finite  and it is isomorphic to the alternating group of permutations $\mathtt{A}_{g}\triangleleft{\mathtt{S}_{g}}$  acting on the set $\widehat{P}={\Phi_{P}(\Sigma)}\cap{\Theta_{\epsilon}}$. (Here $\Phi_{P}: {\Sigma}\rightarrow{Jac\Sigma}$ is a Jacobi mapping originatingat $P$  and ${\Theta_{\epsilon}}\subset{Jac\Sigma}$ is the appropriate theta divisor   ~\cite{FK92},  ~\cite{FK01}.)  Since there is a  natural identification between the  (ordered) set $\widehat{P}$ and  the (ordered) set $\{\mathcal{A}^{\epsilon}_{P}\}\cong{\{1,\ldots,g\}_{P}}$ we may view  $\mathtt{G}_{P}$  as the group acting on the set of $g$ points  that form the integral divisor $\mathcal{A}^{\epsilon}_{P}$.

 Any hyperelliptic Riemann surface $\Sigma$ of genus $g>1$   must have exceptional points that form the exceptional leaves of the $\epsilon$-foliation. The number of such points is finite and it is smaller than or equal to $4g$. 
In this paper we will   find the spin groups that are attached to such exceptional points. 

For any exceptional spin graph $\mathcal{S}^{\epsilon}_{P}$ we will introduce the notion of the basic graph $\mathcal{S}(r)\subset{\mathcal{S}^{\epsilon}_{P}}$, $r<g$, and the notion of the connection graph for $\mathcal{S}^{\epsilon}_{P}$.  The basic graph $\mathcal{S}(r)$ is isomorphic to the standard graph on a surface of genus equal to $r$ and an exceptional spin graph $\mathcal{S}^{\epsilon}_{P}$ can be obtained  by decorating its basic graph. The connection graph is obtained from $\mathcal{S}(r)$ by adding edges between all conjugate vertices that are connected in $\mathcal{S}^{\epsilon}_{P}$.

It occurs that to find spin groups that are attached to vertices of an exceptional spin graph it is enough to consider only its connection graph.

 A standard or  a Weierstrass graphs are totally symmetric with respect to all of its vertices,  i.e. the properties of all vertices of a graph are exactly the same. It is seldom true for exceptional graphs. However, it occurs for any genus, for example, when the genus of a surface $\Sigma$  is equivalent to $2$ modulo $3$ and when a graph belongs to the isomorphic class $\mathcal{S}_{i_{\max},0,0}$.  However, more often the connection graphs of spin graphs are totally symmetric with respect to all of its vertices. For this it is only required that all vertices of an exceptional spin graph have the same $\epsilon$-degrees.

For  any vertex $Q$ of an exceptional spin graph $\mathcal{S}^{\epsilon}_{P}$
    we will introduce the notion of a spin chain $\mathsf{W}_{Q}$  at $Q$  (similarly as in a standard  and in a Weierstrass case).  
 Its definition will be the same as in a standard case only when the order $r$ of the graph is $r\leq{3}$. This means that when $r\leq{3}$ a chain $\mathsf{W}_{Q}$ will be given by a loop $\mathsf{L}_{Q}$ at $Q$ together with a choice of faces along its edges.

When $r>3$ then the definition of a spin chain $\mathsf{W}_{Q}$ will involve also a choice of some standard $3$-cells  (see definition $10$).

Moreover, in the contrary to a standard or to a Weierstrass case, not all spin chains will be  $\epsilon$-admissible. When $\mathsf{W}_{Q}$ is not admissible, i.e.  when compositions of the mappings determined by this chain are not defined, then we assume that such chain does not move elements of the set $\widehat{Q}$ at all. On the other hand, any admissible chain $\mathsf{W}_{Q}$ determines an isomorphism of $\widehat{Q}$ which can be represented as a permutation of this set.

When all vertices of the loop  $\mathsf{L}_{Q}$ of a chain $\mathsf{W}_{Q}$ have the same $\epsilon$-degrees then this is enough to guarantee that this chain is admissible.  However,  at any $Q\in{\{\mathcal{S}^{\epsilon}_{P}\}}$   there are plenty of admissible chains  which have vertices  with different $\epsilon$-degrees. 

A case of an exceptional graph of order $r=3$ will be investigate very carefully. The reason for this is that for an arbitrary order $r>3$ any decorated $3$-cell of an exceptional graph $\mathcal{S}_{P}$ is either isomorphic to the standard spin graph $\mathcal{S}(3)$ or to some of the exceptional spin graph of order $3$.

This paper is organized as follows:   Section $2$  contains preliminaries . In section $3$ we consider exceptional graphs of order $r\leq{2}$. It is shown that when the order is $r<2$ then all exceptional spin group must be trivial. When the order of a graph is $r=2$ (it is possible only on a surface of genus $g\geq{3}$) then the spin group attached to a vertex of such graph can be eiter trivial or isomorphic to the cyclic group $\mathcal{C}_{3}$ or isomorphic to the symmetry group $\mathtt{S}_{2}$ or to $\mathtt{S}_{3}$.

In section $4$ we investigate exceptional graphs of order $r=3$.  We find that the spin group $\mathtt{G}_{Q}$ attached to a vertex $Q$  of such graph is either isomorphic to the symmetry group $\mathtt{S}_{3}$ (when $deg_{\epsilon}Q=3$) or to the symmetry group $\mathtt{S}_{4}$ (when $deg_{\epsilon}Q=4$).

We consider exceptional spin graphs of any order $r>3$ in section $5$. We use decorated $3$-cells of these graphs and the results of the section $4$ to show that for any vertex $Q$ the spin group $\mathtt{G}_{Q}$ must be isomorphic either to the group $\mathtt{S}_{r}$ (when $deg_{\epsilon}Q=r$ ) or to the group $\mathtt{S}_{r+1}$ (when $deg_{\epsilon}Q={r+1}$). 

Now,  almost all points of $\Sigma$  are standard points or the Weierstrass points and hence we may   classify (see \cite{KB13}) hyperelliptic Riemann surfaces by the exceptional spin graphs that are present on a surface. (The number of such graphs 
must satisfy a concrete condition ~\cite{KB13}.) The results of this paper , together with the fact that  (see ~\cite{KMB13}) the spin groups at almost all  (standard or Weierstrass) points of $\Sigma$ are isomorphic to the alternating group ${\mathtt{A}_{g}}\triangleleft{\mathtt{S_{g}}}$, $g>r$, allow us to classify the hyperelliptic Riemann surfaces using  the concrete number of concrete exceptional spin groups that are attached to exceptional points.

\section{PRELIMINARIES}

Let $\mathcal{S}(r)$ denotes the isomorphic class of  standard spin graphs on a surface of genus $r$. It has $2r+2$ vertices and no pair of conjugate vertices are connected by an edge.   Such graph is totally symmetric with respect to all of its vertices.
\newtheorem{definition}{Definition}
\begin{definition}
Let $\mathcal{S}^{\epsilon}_{P}$ be an exceptional spin graph on a surface $\Sigma$ whose  genus $g$ is $g>2$. The graph $\mathcal{S}(r)\subset{\mathcal{S}^{\epsilon}_{P}}$ with the greatest possible $r\geq{2}$  will be called the basic spin graph of $\mathcal{S}^{\epsilon}_{P}$.
\end{definition}
  Obviously we  have $r<g$.  Now, any exceptional graph can be obtained from its basic graph by the following operations;
\begin{enumerate}
\item Replace a simple straight edge by a multiple straight edge.
\item Add an arc edge with multiplicity $l\geq{1}$ to a straight edge.
\item Add an edge between some conjugate vertices (always straight with appropriate multiplicity).
\end{enumerate}
 We may say that an exceptional spin graph $\mathcal{S}^{\epsilon}_{P}$ is obtained from its basic graph $\mathcal{S}(r)$, $r<g$, by  decorating  the latter one. In the future the subscript '$\epsilon$'   will be usually omitted. 

 Any exceptional spin graph $\mathcal{S}_{P}$ whose base graph is $\mathcal{S}(r)$, $2<r<g$, has $2r+2$ vertices.  For any vertex $Q\in{\{\mathcal{S}_{P}\}}$  the divisor  $\mathcal{A}_{Q}$ must have the form  $\mathcal{A}_{Q}={\widetilde{Q}^{i}Q^{k_{1}}_{1}Q_{2}^{k_{2}}{\ldots}Q_{r}^{k_{r}}}$.

    We  have $k_{l}\geq{1}$   for each $l={1,2,..,r}$,   the exponent $i\geq{0}$,  and $i+k_{1}+k_{2}+{\ldots}+k_{r}=g$. When $r\leq{1}$ then the corresponding exceptional spin graph does not have a basic graph.
Let $k_{0}=i+1$ and let ${\widehat{k}}(Q)$ denote the $(r+1)$-tuple given by $\widehat{k}={\widehat{k}(Q)}=(k_{0},k_{1},\ldots,k_{r})$.  
\begin{definition}
A vertex $Q$ of a spin graph $\mathcal{S}_{P}$ will be called a head of the graph if the multiplicity $i={k_{0}-1}$ of the point $\widetilde{Q}$ in the integral divisor $\mathcal{A}_{Q}$ is the smallest one. Equivalently, when $k_{0}(Q)={\min\{k_{0}(R); R\in{\{\mathcal{S}_{P}\}}\}}$.
\end{definition}

Obviously, when a graph is a standard one  or when it is a Weierstrass spin graph then each  vertex is a head with $k_{0}=1=i+1$.

\begin{definition}
Let $P$ be a head of a spin graph $\mathcal{S}_{P}$. The cardinality $r$ of the set of different points of the integral divisor $\mathcal{A}_{P}$ that are distinct from $\widetilde{P}$ will be called the order of the graph $\mathcal{S}_{P}$. 
\end{definition}
Hence, any standard or any Weierstrass graph on $\Sigma$  has its order equal to the genus $g$ of the surface. Any exceptional spin graph has the order   $r$ which satisfies $0\leq{r}<g$. When the order of $\mathcal{S}_{P}$ is $r\geq{2}$ then  this graph has  $\mathcal{S}(r)$ is its basic graph.

When we choose the indexes for the points of the set $\{\mathcal{A}_{P}\}$, i.e. when we write ${\mathcal{A}_{P}}={{\widetilde{P}}^{i}P_{1}^{k_{1}}{\ldots}P_{r}^{k_{r}}}$ then, simultaneously, we determine a unique ordering for each set ${\{\mathcal{A}_{Q}\}}$,  ${Q\in{\{\mathcal{S}_{P}\}}}$.  Equivalently, we obtain a unique ordering of the set ${\widehat{Q}}:={\Phi_{Q}(\{\mathcal{A}_{Q}\})}\subset{\Theta_{\epsilon}}$.

Since the image of a divisor ${\frac{Q}{R}}$ of degree zero under any Jacobi mapping (with arbitrary origin $S\in{\Sigma}$) is exactly the same, we will often denote the point $\Phi_{R}(Q)$ by $\frac{Q}{R}$. 

For any point $P\in{\Sigma}$ its image obtained by the hyperelliptic involution on $\Sigma$ will be denoted by $\widetilde{P}$.  Similarly, for any divisor $D$ on a surface $\Sigma$   by $\widetilde{D}$ we will denote  its image under the hyperelliptic involution.

For any object $\mathcal{O}$  the set of all its points 
 will be denoted by $\{\mathcal{O}\}$.

\section{ SPIN GROUPS ASSOCIATED to GRAPHS of ORDER ${r}\leq{2}$ }
\subsection{$r=0$}
In this case a spin graph has the form shown by the  Pict1.  It is obvious that both spin groups $\mathtt{G}_{P}$ and $\mathtt{G}_{\widetilde{P}}$ are trivial.

\begin{pspicture}(0,-1)(4,1)
\psline[showpoints=true]%
(1,0)(3,0)
\rput(2,-0.5){\rnode{A}{Pict.1}}
\rput(0.8,-0.2){\rnode{a}{$P$}}
\rput(3.2,-0.2){\rnode{b}{$\widetilde{P}$}}
\rput(1.9,0.2){\rnode{c}{$g$}}

\end{pspicture}

\subsection{$r=1$}.
Let $P$ be a head of an exceptional spin graph $\mathcal{S}_{P}$ of order $r=1$. In this case we have $\mathcal{A}_{P}={\widetilde{P}^{i}P^{k_{1}}_{1}}$ with $i={k_{0}-1}$, $i+k_{1}={g}$ and with $k_{0}\leq{k_{1}={k_{0}+p_{1}}}$, $p_{1}\geq{0}$.   We must have (see[1])
\begin{equation}
 0\leq{i}\leq{\left\lfloor \frac{g-1}{2}\right\rfloor}
 \end{equation}
  Since  ${\widehat{k}(P)}={\widehat{k}}=(k_{0},k_{1})=(i+1,i+1+p_{1})$ we will denote the corresponding class of isomorphic exceptional graphs by $\mathcal{S}_{i,p_{1}}(g)$.  When $i=0$ then $k_{1}=g$ and the class ${\mathcal{S}_{i,p_{1}}}={\mathcal{S}_{0,g-1}}$ is given by Pict2.  
 
 \begin{pspicture}(-2,-2)(6,3)
 \psline[showpoints=true]%
 (-2,0)(-1,1.5)
 \psline[showpoints=true]%
 (1,1.5)(2,0)
 \psline(-1,1.5)(1,1.5)
 \psarc{->}%
 (0,-0.2){2.1}{120}{170}
 \psarc{<-}%
 (0,-0.2){2.1}{10}{60}
 \rput(-0.5,-0.5){\rnode{A}{$\mathcal{S}_{0,p_{1}}$}}
 \rput(-0.1,-1){\rnode{B}{Pict.2}}
 
 \rput(4.5,1.5){\rnode{C}{$k_{0}=1$}}
 \rput(4.5,1){\rnode{D}{$p_{1}={g-1}$}}
 
 \rput(-0.1,1.7){\rnode{a}{$p_{1}$}}
 \rput(-1.3,1.8){\rnode{b}{$P_{1}$}}
 \rput(1.3,1.8){\rnode{c}{$\widetilde{P_{1}}$}}
 \rput(-2,1){\rnode{d}{$p_{1}$}}
 \rput(-1.1,0.8){\rnode{e}{$1$}}
 \rput(1.1,0.8){\rnode{f}{$1$}}
 \rput(2,1){\rnode{g}{$p_{1}$}}
 \rput(-2.3,0.1){\rnode{h}{$P$}}
 \rput(2.3,0.1){\rnode{i}{$\widetilde{P}$}}
 
 \end{pspicture}
 
 When $i\geq{1}$ then all conjugate vertices of a graph $\mathcal{S}_{P}$ are connected and graph belongs to the class $\mathcal{S}_{i,p_{1}}$ given by the Pict3.

 In particular, when the genus of a surface $\Sigma$ is odd then for $i={i_{\max}}={\frac{g-1}{2}}$ we have $p_{1}=0$.  For the remaining values of $i$, i.e. for $i<{\frac{g-1}{2}}$ we have $p_{1}={2i_{\max}-2i}$ and hence it is always even.  When the genus of a surface is an even integer then $i_{\max}={\left\lfloor \frac{g-1}{2}\right\rfloor}<{\frac{g-1}{2}}$ and we must have $p_{1}=1$. For the remaining values of $i$ we have $p_{1}={2i_{\max}-2i+1}\geq{3}$ which is always odd.
 \begin{pspicture}(-2,-2)(6,3)
 \psline[showpoints=true]%
 (-2,0)(-1,1.5)
 \psline[showpoints=true]%
 (1,1.5)(2,0)
 \psline(-1,1.5)(1,1.5)
 \psline(-2,0)(2,0)
 \psarc{->}%
 (0,-0.2){2.1}{120}{170}
 \psarc{<-}%
 (0,-0.2){2.1}{10}{60}
 
 \rput(-0.2,-0.5){\rnode{A}{$\mathcal{S}_{i,p_{1}}$}}
 \rput(0,-1){\rnode{B}{Pict.3}}
 \rput(4.5,1.5){\rnode{C}{$k_{0}\leq{k_{1}}={k_{0}+p_{1}}$}}
 \rput(4.5,1){\rnode{D}{$i={k_{0}-1}\geq{1}$}}
 \rput(4.5,0.5){\rnode{E}{$g={i+k_{1}}={2i+p_{1}+1}$}}
 
 \rput(-0.2,1.7){\rnode{a}{${i+p_{1}}$}}
 \rput(-0.2,0.2){\rnode{b}{$i$}}
 \rput(-2,1){\rnode{c}{$p_{1}$}}
 \rput(2,1){\rnode{d}{$p_{1}$}}
 \rput(-1,0.8){\rnode{e}{${i+1}$}}
 \rput(1,0.8){\rnode{f}{${i+1}$}}
 
 \rput(-2.3,0.1){\rnode{g}{$P$}}
 \rput(2.3,0.1){\rnode{h}{$\widetilde{P}$}}
 \rput(1.3,1.8){\rnode{i}{$\widetilde{P_{1}}$}}
 \rput(-1.3,1.8){\rnode{j}{$P_{1}$}}

 \end{pspicture}
 
 $\mathbf{EXAMPLE}$ $\mathbf{I}$.  Suppose that the genus of $\Sigma$ is $g=2$. Now, any exceptional spin graph of order one on $\Sigma$ belongs to the class $\mathcal{S}_{i,p_{1}}={\mathcal{S}_{0,1}}$  (a particular case of Pict.2).

 $\mathbf{EXAMPLE}$ $\mathbf{II}$.  When $g=3$  then the possible classes $\mathcal{S}_{i,p_{1}}$ of isomorphic spin graphs are $\mathcal{S}_{0,2}$ and $\mathcal{S}_{1,0}$. They are  particular cases of Pict.2 and Pict.3  respectively.

 Let the genus of a  surface $\Sigma$ be $g=4$. Now $\mathcal{S}_{P}$  belongs to a  class $\mathcal{S}_{i,p_{1}}$  with $(i,p_{1})\in{\{(0,3),(1,1)\}}$. Equivalently, if $P$ is a head of such graph then we have $\widehat{k}(P)=(k_{0},k_{1})$ with  $(k_{0},k_{1})\in{\{(1,4),(2,3)\}}$. 
 
  When the genus is $g=5$ then the possible classes  $\mathcal{S}_{i,p_{1}}$ have $(i,p_{1})$ which belong to the set ${\{(0,4),(1,2),(2,0)\}}$ (equivalently, $\widehat{k}(P)=(k_{0},k_{1})$ belongs to the set $\{(1,5),(2,4),(3,3)\}$ respectively).
 
 Since on any surface of genus $g\geq{2}$ any exceptional spin graph of order $r=1$ has at most only one quadrilateral face  it is obvious that the spin goup attached to any of its vertex must be trivial.(see ~\cite{KMB13}).
 
 \subsection{$r=2$}.
 Before we will start to investigate the spin groups attached to exceptional spin graphs of order two let us notice the following: 
 
  For any vertex $Q$ of any graph $\mathcal{S}_{P}$ (on any hyperelliptic Riemann surface) the spin group $\mathtt{G}_{Q}$ is totally determined by the presence of connections (i.e. straight edges) between the vertices of the graph  and not by their multiplicities. Hence, since the multiplicities of edges are irrelevant and since any arc-connection must accompany a straight line connection we observe that to find the exceptional spin group $\mathtt{G}_{Q}$, $Q\in{\{\mathcal{S}_{P}\}}$,  it is enough to consider the basic graph $\mathcal{S}(r)\subset{\mathcal{S}_{P}}$, $r\geq{2}$, $r<g$, together with all additional (straight) edges connecting appropriate conjugate vertices in $\mathcal{S}_{P}$.  
 \begin{definition}
 Let $\mathcal{S}_{P}$ be an exceptional spin graph of order $r\geq{2}$, $r<g$. The basic graph $\mathcal{S}(r)\subset{\mathcal{S}_{P}}$ together with straight edges between conjugate vetrices that are connected in $\mathcal{S}_{P}$ will be called the connection graph for $\mathcal{S}_{P}$. We will draw such connection graps using only dash lines.
 \end{definition}
 Suppose that $P$ is a head of an exceptional spin graph $\mathcal{S}_{P}$ of order $r=2$.  The general form of such graph is given by the Pict4.

 For  $\widehat{k}(Q)=(k_{0},k_{1},k_{2})$,  $Q\in{\{\mathcal{S}_{P}\}}$, we will use the similar notation that before, i.e.   $k_{0}=i+1$, $k_{0}\leq{k_{1}}={k_{0}+p_{1}}\leq{k_{2}}$ and $k_{2}={k_{1}+p_{2}}$.  Since we have $g={i+k_{1}+k_{2}}={3i+2p_{1}+p_{2}+2}$, the maximum possible value   $i$ for a head of the graph is $i_{\max}={\left\lfloor \frac{g-2}{3}\right\rfloor}$.  When $i=i_{\max}$ then, depending on the genus of a surface, we have the following possibilities:
 \begin{itemize}
 \item When $g\equiv{0mod3}$ then $2p_{1}+p_{2}=1$. Hence $p_{1}=0$,  $p_{2}=1$ and the vertices $P$, $P_{1}$ and their conjugates are heads of the graph $\mathcal{S}_{P}$.  
 \item When $g\equiv{1mod3}$ then $2p_{1}+p_{2}=2$ and either $p_{1}=0$, $p_{2}=2$ (and hence $P$, $P_{1}$ and they conjugates are heads) or $p_{1}=1$, $p_{2}=0$ and only $P$ and its conjugate are heads of the graph.
 \item When $g\equiv{2mod3}$ then  $p_{1}=p_{2}=0$ and hence the graph is totally symmetric with respect to all of its vertices.  Each vertex $Q$ of $\mathcal{S}_{P}$ is a head.
 \end{itemize}

 \begin{pspicture}(-4,-4.5)(5.5,4)
 \pspolygon[showpoints=true]%
 (-1.5,-2.6)(1.5,-2.6)(3,0)(1.5,2.6)(-1.5,2.6)(-3,0)
 \rput(5.5,3){\rnode{A}{$k_{0}={i+1}$}}
 \rput(5.5,2.4){\rnode{B}{$k_{1}={k_{0}+p_{1}}$}}
 \rput(5.5,1.8){\rnode{C}{$k_{2}=k_{1}+p_{2}$}}
 \rput(5.5,1.2){\rnode{D}{$g={i+k_{1}+k_{2}}\geq{3}$}}
 \rput(-0.2,-4){\rnode{E}{Pict.4}}
 \rput(5.5,0){\rnode{F}{$s_{1}={i+p_{1}}$}}
 \rput(5.5,-0.6){\rnode{G}{$s_{2}={i+p_{1}+p_{2}}$}}

 \rput(-3.3,0){\rnode{a}{$P$}}
 \rput(3.4,0){\rnode{b}{$\widetilde{P}$}}
 \rput(-1.6,3.1){\rnode{c}{$P_{2}$}}
 \rput(1.6,3.1){\rnode{d}{$\widetilde{P_{1}}$}}
 \rput(-1.6,-3.1){\rnode{e}{$P_{1}$}}
 \rput(1.6,-3.1){\rnode{f}{$\widetilde{P_{2}}$}}

 \psarc{->}%
 (-1.5,0.866){1.8}{95}{205}
 \psarc{<-}%
 (1.5,0.866){1.8}{-25}{85}
 \psarc{->}%
 (0,1.74){1.8}{35}{145}
 \psarc{<-}%
 (-1.5,-0.866){1.8}{155}{265}
 \psarc{->}%
 (1.5,-0.866){1.8}{-85}{25}
 \psarc{->}%
 (0,-1.74){1.8}{-145}{-35}
 
 \psline(-1.5,2.6)(1.5,-2.6)
 \psline(-1.5,-2.6)(-0.1,-0.1)
 \psline(0.1,0.1)(1.5,2.6)
 \psline(-3,0)(-0.1,0)
 \psline(0.1,0)(3,0)
 
 \rput(0,3.3){\rnode{g}{$p_{2}$}}
 \rput(0,2.3){\rnode{h}{$k_{1}$}}
 \rput(0,-2.3){\rnode{i}{$k_{1}$}}
 \rput(0,-3.3){\rnode{j}{$p_{2}$}}
 \rput(-1,1.3){\rnode{k}{$s_{1}$}}
 \rput(1,1.3){\rnode{l}{$s_{2}$}}
 \rput(-2.1,1){\rnode{m}{$k_{0}$}}
 \rput(-2.1,-1){\rnode{n}{$k_{0}$}}
 \rput(-1.5,-0.2){\rnode{o}{$i$}}
 \rput(2.1,1){\rnode{p}{$k_{0}$}}
 \rput(2.1,-1){\rnode{q}{$k_{0}$}}
 \rput(-3,1.4){\rnode{r}{$p_{1}$}}
 \rput(-3.1,-1.4){\rnode{s}{${p_{1}+p_{2}}$}}
 \rput(3.1,1.4){\rnode{t}{${p_{1}+p_{2}}$}}
 \rput(3,-1.4){\rnode{u}{$p_{1}$}}

 \end{pspicture}

Let $P$ be a head of an exceptional spin graph $\mathcal{S}_{P}$.  According to our notation, $\widehat{k}(P)=(k_{0},k_{1},k_{2})=(i+1,i+p_{1}+1,i+p_{1}+p_{2}+1)$ and hence the class of exceptional spin graphs isomorphic to $\mathcal{S}_{P}$ will be denoted by $\mathcal{S}_{i,p_{1},p_{2}}(g)$ (or when the genus is fixed,  by $\mathcal{S}_{i,p_{1},p_{2}}$).

   The connection graph of any exceptional spin graph of degree $r=2$ has one of the forms that are given by Pict.5a, Pict.5b, and by Pict.5c.

 Let us consider the following examples.

$\mathbf{EXAMPLE}$ $\mathbf{I}$. Suppose that the genus of a surface is $g=3$. In this case any exceptional spin graph $\mathcal{S}_{P}$ of order $r=2$ must belong to the class  $\mathcal{S}_{i,p_{1},p_{2}}={\mathcal{S}_{0,0,1}}$. Its connection graph has the form given by Pict.5a.

$\mathbf{EXAMPLE}$ $\mathbf{II}$.  Let genus $g=4$. Now graph $\mathcal{S}_{P}$ may belong either to the class $\mathcal{S}_{0,0,2}$ or to $\mathcal{S}_{0,1,0}$ corresponding to $\widehat{k}(P)=(1,1,3)$ or to $\widehat{k}(P)=(1,2,2)$ respectively. The connection graphs for these exceptional spin graphs   are given by Pict.5a and Pict.5b.

$\mathbf{EXAMPLE}$ $\mathbf{III}$. Suppose that the genus is equal to $5$. Now any exceptional spin graph $\mathcal{S}_{P}$ of degree two must belong to a class $\mathcal{S}_{i,p_{1},p_{2}}$ with $(i,p_{1},p_{2})\in{\{(0,1,1),(0,0,3),(1,0,0)\}}$. Equivalently we have $\widehat{k}(P)$ with
\begin{equation*}
 (k_{0},k_{1},k_{2})\in{\{(1,2,3),(1,1,4),(2,2,2)\}}
 \end{equation*}
  respectively.  In the first two cases the connection graph for $\mathcal{S}_{i,p_{1},p_{2}}$ look like on Pict.5a  and Pict.5b respectively.   In the latter case the graph $\mathcal{S}_{1,0,0}$ is symmetric with respect to all of its vertices.  Its graph is given by Pict.6 and has the connection graph as on Pict.5c.

 \begin{pspicture}(-4,-3)(4,3)
 \pspolygon[showpoints=true,linestyle=dashed]%
 (-3,0)(-2.25,-1.3)(-0.75,-1.3)(0,0)(-0.75,1.3)(-2.25,1.3)
 \psline[linestyle=dashed]%
 (-2.25,1.3)(-0.75,-1.3)
 \rput(-2.25,-1.8){\rnode{A}{$k_{0}=1=k_{1}$;}}
 \rput(-0.75,-1.8){\rnode{B}{$k_{2}\geq{2}$}}
 \rput(-1.5,-2.3){\rnode{C}{Pict.5a}}
 \rput(-2.5,1.4){\rnode{a}{$P_{2}$}}
 \rput(-0.5,1.4){\rnode{b}{$\widetilde{P_{1}}$}}
 \rput(-2.6,-1.1){\rnode{c}{$P_{1}$}}
 \rput(-0.4,-1.1){\rnode{d}{$\widetilde{P_{2}}$}}
 \rput(-3.2,0.2){\rnode{e}{$P$}}
 \rput(0.2,0.2){\rnode{f}{$\widetilde{P}$}}

 \pspolygon[showpoints=true,linestyle=dashed]%
 (2,0)(2.75,-1.3)(4.25,-1.3)(5,0)(4.25,1.3)(2.75,1.3)
 \psline[linestyle=dashed]%
 (4.25,-1.3)(2.75,1.3)
 \psline[linestyle=dashed]%
 (2.75,-1.3)(4.25,1.3)
 \rput(2.75,-1.8){\rnode{D}{$k_{0}=1<{k_{1}}\leq{k_{2}}$}}
 \rput(3.5,-2.3){\rnode{E}{Pict.5b}}
 \rput(2.5,1.4){\rnode{g}{$P_{2}$}}
 \rput(4.5,1.4){\rnode{h}{$\widetilde{P_{1}}$}}
 \rput(1.8,0.2){\rnode{i}{$P$}}
 \rput(5.2,0.2){\rnode{j}{$\widetilde{P}$}}
 \rput(2.4,-1.1){\rnode{k}{$P_{2}$}}
 \rput(4.6,-1.1){\rnode{l}{$\widetilde{P_{2}}$}}

 \end{pspicture}

  Let $Q$ be any vertex of $\mathcal{S}_{P}$.  We observe that its $\epsilon$-degree can be read from the connection graph and that we have  either $deg_{\epsilon}Q=r=2$ (hence $\widehat{Q}\cong{\{1,2\}}_{Q}$) or $deg_{\epsilon}Q={r+1}=3$ (and  hence  $\widehat{Q}\cong{\{1,2,3\}_{Q}}$). (We recall  that, for any vertex $Q$ of $\mathcal{S}_{P}$, the enumeration of elements of $\widehat{Q}={\Phi_{Q}\cap{\Theta_{\epsilon}}}$ is uniquely determined by the enumeration of the points, different than $\widetilde{P}$,
  of the divisor $\mathcal{A}_{P}$.)

  \begin{pspicture}(-4,-3)(4,3)
  \pspolygon[showpoints=true, linestyle=dashed]%
  (-3,0)(-2.25,-1.3)(-0.75,-1.3)(0,0)(-0.75,1.3)(-2.25,1.3)
  \rput(-1.5,-2.4){\rnode{A}{Pict.5c}}
  \rput(-2.25,-1.8){\rnode{C}{$k_{l}>1;l=0,1,2$}}
  \psline[linestyle=dashed]%
  (-3,0)(0,0)
  \psline[linestyle=dashed]%
  (-2.25,1.3)(-0.75,-1.3)
  \psline[linestyle=dashed]%
  (-2.25,-1.3)(-0.75,1.3)
  \rput(-2.5,1.4){\rnode{a}{$P_{2}$}}
  \rput(-0.5,1.4){\rnode{b}{$\widetilde{P_{1}}$}}
  \rput(-2.6,-1.1){\rnode{c}{$P_{1}$}}
  \rput(-0.4,-1.1){\rnode{d}{$\widetilde{P_{2}}$}}
  \rput(-3.2,0.2){\rnode{e}{$P$}}
  \rput(0.2,0.2){\rnode{f}{$\widetilde{P}$}}

  \pspolygon[showpoints=true]
  (2,0)(2.75,-1.3)(4.25,-1.3)(5,0)(4.25,1.3)(2.75,1.3)
  \rput(2.5,-1.8){\rnode{E}{$g=5;$}}
  \rput(3.5,-1.8){\rnode{D}{$\mathcal{S}_{1,0,0}$}}
  \rput(4,-2.4){\rnode{B}{Pict.6}}
  \psline(2.75,-1.3)(4.25,1.3)
  \psline(2.75,1.3)(3.4,0.2)
  \psline(3.6,-0.2)(4.25,-1.3)
  \psline(2,0)(3.3,0)
  \psline(3.7,0)(5,0)
  \rput(2.5,1.4){\rnode{g}{$P_{2}$}}
  \rput(4.5,1.4){\rnode{h}{$\widetilde{P_{1}}$}}
  \rput(1.8,0.2){\rnode{i}{$P$}}
  \rput(5.2,0.2){\rnode{j}{$\widetilde{P}$}}
  \rput(2.4,-1.1){\rnode{k}{$P_{1}$}}
  \rput(4.6,-1.1){\rnode{l}{$\widetilde{P_{2}}$}}
  
  \rput(3.5,1.5){\rnode{m}{$2$}}
  \rput(3.5,-1.5){\rnode{n}{$2$}}
  \rput(2.2,0.7){\rnode{o}{$2$}}
  \rput(4.8,0.7){\rnode{p}{$2$}}
  \rput(2.1,-0.5){\rnode{q}{$2$}}
  \rput(4.9,-0.5){\rnode{r}{$2$}}
  \rput(3.3,0.7){\rnode{s}{$1$}}
  \rput(2.9,-0.8){\rnode{t}{$1$}}
  \rput(2.5,0.3){\rnode{u}{$1$}}

  \end{pspicture}

  To find the spin group $\mathtt{G}_{Q}$ we intoduce spin chains $\mathsf{W}_{Q}$ at $Q$ which will produce permutations of the set $\widehat{Q}$ resprctively.
  \begin{definition}
  Let $Q$ be a vertex of an exceptional spin graph $\mathcal{S}_{P}$ of order $r\leq{3}$. A spin chain $\mathsf{W}_{Q}$ at $Q$ consists of a loop ${\mathsf{L}_{Q}}=QQ_{1}Q_{2}{\ldots}Q_{N}Q$ in $\mathcal{S}_{P}$ together with a choice of faces $F_{1},F_{2},\ldots,F_{N+1}$ along the edges of this loop. The order of points in $\mathsf{L}_{Q}$ indicate the direction of traveling along this loop. The edge $QQ_{1}$ is an edge of a face $F_{1}\subset{\mathcal{S}_{P}}$ etc.
  \end{definition}
  
  \begin{definition}
  A chain $\mathsf{W}_{Q}$  at $Q\in{\{\mathcal{S}_{P}\}}$ is $\epsilon$-admissible when all compositions of mappings defined by the faces along the edges of its loop $\mathsf{L}_{Q}$ are well defined.
  \end{definition}
  
  \begin{lemma}\hfill
  Let $P$ be a head of an exceptional spin graph $\mathcal{S}_{P}$ of order $r=2$.  Let $\mathcal{A}_{P}={\widetilde{P}^{i}P^{k_{1}}_{1}P^{k_{2}}_{2}}$ with $1\leq{k_{0}\leq{k_{1}}\leq{k_{2}}}$; $i+k_{1}+k_{2}=g\geq{3}$.
  \begin{enumerate}
  \item When $k_{l}=1$ for $l=0,1$ (and hence $k_{2}\geq{2}$) then the groups $\mathtt{G}_{P_{2}}$ and $\mathtt{G}_{\widetilde{P_{2}}}$ are both  isomorphic to the cyclic group $\mathtt{C}_{3}$ and all spin groups attached to the remaining vertices of $\mathcal{S}_{P}$ are trivial.
  \item When $k_{0}=1$ but $k_{l}\geq{2}$ for $l=1,2$, then each group $\mathtt{G}_{P_{l}}\cong{\mathtt{G}_{\widetilde{P_{l}}}}$ is isomorphic to the symmetric group $\mathtt{S}_{3}$ and the groups $\mathtt{G}_{P}\cong{\mathtt{G}_{\widetilde{P}}}$ are trivial.
  \item When $k_{l}\geq{2}$ for $l=0,1,2$ then the spin group $\mathtt{G}_{Q}$ attached to any vertex $Q$ of $\mathcal{S}_{P}$ is isomorphic to the symmetric group $\mathtt{S}_{3}$.
  \end{enumerate}
  \end{lemma}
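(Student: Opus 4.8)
The plan is to push everything through the connection graph and then, vertex by vertex, to read off the $\epsilon$-admissible spin chains $\mathsf{W}_Q$ and the permutations of $\widehat{Q}$ they induce. The needed reduction is already available: since $\mathtt{G}_Q$ depends only on the straight edges of $\mathcal{S}_P$ and not on their multiplicities, the three hypotheses $k_0=k_1=1,\ k_2\geq 2$; $\ k_0=1,\ k_1,k_2\geq 2$; and $\ k_l\geq 2,\ l=0,1,2$, are exactly the three isomorphism types of connection graph. A conjugate edge $P_l\widetilde{P_l}$ is present precisely when $k_l\geq 2$, so case (1) adds only the diagonal $P_2\widetilde{P_2}$ (Pict.5a), case (2) adds $P_1\widetilde{P_1}$ and $P_2\widetilde{P_2}$ (Pict.5b), and case (3) adds all three (Pict.5c). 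Consequently $\deg_{\epsilon}Q$, and hence $|\widehat{Q}|$, can be read directly off the picture: in case (1) only $P_2,\widetilde{P_2}$ have $\deg_{\epsilon}=3$; in case (2) the four vertices $P_1,\widetilde{P_1},P_2,\widetilde{P_2}$ do; in case (3) every vertex does; and the remaining vertices of cases (1) and (2) have $\deg_{\epsilon}=2$.

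First I would dispose of the degree-two vertices, that is, those $Q$ carrying no incident conjugate edge (these are $P,\widetilde{P},P_1,\widetilde{P_1}$ in case (1) and $P,\widetilde{P}$ in case (2)). For such $Q$ the set $\widehat{Q}$ has only two elements, so $\mathtt{G}_Q\leq\mathtt{S}_2$ and its alternating part $\mathtt{A}_2$ is already trivial. Since $Q$ carries no conjugate edge, the transposition-chain described below cannot be based at $Q$, and exactly as in the order-one case (where at most one quadrilateral face is available) every admissible loop $\mathsf{L}_Q$ closes up to the identity of $\widehat{Q}$. Hence $\mathtt{G}_Q$ is trivial.

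For the degree-three vertices I would argue in two layers. The standard layer: the basic graph $\mathcal{S}(2)\subset\mathcal{S}_P$ is a standard spin graph, so the chains whose loops and faces stay inside $\mathcal{S}(2)$ behave as in the non-exceptional case and generate the alternating group on $\widehat{Q}$; since $|\widehat{Q}|=3$ this is $\mathtt{A}_3\cong\mathtt{C}_3$, giving $\mathtt{C}_3\leq\mathtt{G}_Q$ at every degree-three vertex. The transposition layer: I would show that an odd permutation of $\widehat{Q}$ is realized by an admissible chain precisely when, beyond $Q$'s own conjugate edge (the edge that makes $\deg_{\epsilon}Q=3$), a second conjugate edge is present somewhere in the connection graph, supplying the extra quadrilateral face the transposition-chain must cross. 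Reading this off the three graphs: in case (1) the only diagonal is $P_2\widetilde{P_2}$, no second diagonal is available, every chain through it is either inadmissible or closes to an even permutation, and so $\mathtt{G}_{P_2}\cong\mathtt{G}_{\widetilde{P_2}}\cong\mathtt{C}_3$; in case (2) each of $P_1,\widetilde{P_1},P_2,\widetilde{P_2}$ sees the other pair's diagonal, hence acquires a transposition $\tau$, whence $\mathtt{G}_Q=\langle\mathtt{C}_3,\tau\rangle=\mathtt{S}_3$; in case (3) all three diagonals are present and the same argument gives $\mathtt{S}_3$ at every vertex. Throughout, the conjugation symmetry $Q\mapsto\widetilde{Q}$ of the connection graph gives the isomorphisms $\mathtt{G}_{P_l}\cong\mathtt{G}_{\widetilde{P_l}}$ for free.

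The hard part will be the transposition layer, in both its positive and negative form: verifying that the candidate chains across a second diagonal are genuinely $\epsilon$-admissible and that their face-compositions are odd, and --- for case (1) --- proving the negative statement that no admissible chain through a single diagonal can produce an odd permutation. This is the step that requires the explicit elementary face-maps of \cite{KMB13} rather than purely graph-theoretic bookkeeping, including the claim that the purely ``standard'' chains realize exactly $\mathtt{A}_3$ on a degree-three $\widehat{Q}$. Once the parity of each elementary face-map is pinned down, the three assertions follow by collecting the generators and invoking $\mathtt{A}_2=\{e\}$ and $\mathtt{A}_3\cong\mathtt{C}_3$ together with the conjugation symmetry.
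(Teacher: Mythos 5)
Your reduction to the three connection graphs of Pict.5a--5c and your reading of the $\epsilon$-degrees agree with the paper, and your treatment of the degree-two vertices (trivial groups because no admissible chain at such a vertex can leave a single face) matches the paper's argument in substance. But the ``standard layer'' of your argument contains a genuine error that undermines the whole two-layer decomposition. You claim that chains whose loops and faces stay inside the basic graph $\mathcal{S}(2)$ generate the alternating group on $\widehat{Q}$, and that ``since $|\widehat{Q}|=3$ this is $\mathtt{A}_{3}\cong\mathtt{C}_{3}$.'' This conflates the cardinality of $\widehat{Q}$ with the order of the graph. Basic chains reproduce the standard spin group of the basic graph, which for $r=2$ is $\mathtt{A}_{2}=\{e\}$, and they act only on the $2$-element subset $\widehat{\widehat{Q}}=\widehat{Q}-\{\frac{\widetilde{Q}}{Q}\}$, fixing the extra element $\frac{\widetilde{Q}}{Q}$ that is present exactly because $\deg_{\epsilon}Q=3$. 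Worse, as the paper notes at the start of Section 4, for $r=2$ the conjugate edge \emph{kills} the unique cell of $\mathcal{S}(2)$, so in the decorated graph every quadrilateral face contains a conjugate edge and there are essentially no basic faces left to chain through. So the standard layer contributes nothing, and $\mathtt{C}_{3}\leq\mathtt{G}_{Q}$ is not free.

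The consequence is visible already in case (1): with no $\mathtt{A}_{3}$ from the base and (by your own transposition criterion) no transposition available, your framework would predict $\mathtt{G}_{P_{2}}$ trivial, whereas the correct answer $\mathtt{C}_{3}$ is obtained in the paper by an explicit computation: the loop $\mathsf{L}_{P_{2}}=P_{2}\widetilde{P_{2}}P_{2}$ traversing the conjugate edge, with the two faces $F=PP_{1}\widetilde{P_{2}}P_{2}$ and $F'=\widetilde{P}\widetilde{P_{1}}P_{2}\widetilde{P_{2}}$ (both non-basic), whose composition $\widehat{F'}\circ\widehat{F}$ is the $3$-cycle $(123)$ on $\widehat{P_{2}}$. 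Likewise in cases (2) and (3) the even permutations must be exhibited by concrete face compositions through the decoration ($\widehat{F_{3}}\circ\widehat{F_{1}}=(12)$ and $\widehat{F_{3}}\circ\widehat{F_{1}'}=(13)$ at $\widetilde{P_{1}}$ in case (2), giving $\mathtt{S}_{3}$ as the group generated by two distinct transpositions; and $(23)$, $(123)$ from $F_{1},F_{2}$ in case (3)). Your ``second diagonal'' heuristic for when a transposition appears is a reasonable guess and is consistent with the final answers, but the even part of each group has to be produced by the same kind of explicit chain computation --- it cannot be quoted from the non-exceptional theory of the basic graph.
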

  \begin{proof}
  We notice that when $F$ is a quadrilateral face of the connection graph for $\mathcal{S}_{P}$ and when $Q_{1}$ and $Q_{2}$ are two of its vertices with $deg_{\epsilon}Q_{1}\leq{deg_{\epsilon}Q_{2}}$ then the natural mapping $\widehat{F}:{\widehat{Q_{1}}}\rightarrow{\widehat{Q_{2}}}$ is either a  bijection (when $deg_{\epsilon}Q_{1}={deg_{\epsilon}Q_{2}}$) or it is an embedding (when $deg_{\epsilon}Q_{1}=r < {deg_{\epsilon}Q_{2}}=r+1=3$). In the latter case $\widehat{F}$ determines a bijection between the set $\widehat{Q_{1}}\subset{\Theta_{\epsilon}}$ and the set  $\widehat{\widehat{Q_{2}}}\subset{\widehat{Q_{2}}}\subset{\Theta_{\epsilon}}$, where $\widehat{\widehat{Q_{2}}}$ is defined as $\widehat{Q_{2}}-{\{\Phi_{Q_{2}}{\widetilde{Q_{2}}}\}}$.

  $\mathbf{Ad(1)}$. The connection graph is as on Pict.5a.  We see that it has  only two quadrilateral faces: $F=PP_{1}{\widetilde{P_{2}}}P_{2}$ and $F'=\widetilde{P}{\widetilde{P_{1}}}P_{2}{\widetilde{P_{2}}}$. Face $F$ produces the mappings
  \begin{equation*}
  \widehat{F}: \widehat{P}\leftrightarrow{\widehat{P_{1}}}\rightarrow{\widehat{\widetilde{P_{2}}}}\leftrightarrow{\widehat{P_{2}}}
  \end{equation*}
  which  are bijections (denoted by $\leftrightarrow$) when  the appropriate sets are of the same cardinality.  The mapping denoted by $\rightarrow$ is merely an embedding.  More precisely, $\widehat{P}_{1}\rightarrow{\widehat{\widetilde{P_{2}}}}$ consists of the bijection
  \begin{equation*}
    {\widehat{P_{1}}}\leftrightarrow{\{\Phi_{\widetilde{P_{2}}}P_{1},\Phi_{\widetilde{P_{2}}}P_{2}\}}\subset{\widehat{\widetilde{P_{2}}}}
  \end{equation*}
     followed by the embedding of the latter set into $\widehat{\widetilde{P_{2}}}$. From now on we will 
      write $\frac{Q}{R}$ instead of $\Phi_{R}Q$.
     
    Using this notation we may say that $\widehat{F}$ identifies
  \begin{equation} \frac{P_{1}}{P}\Leftrightarrow{\frac{P}{P_{2}}}\Leftrightarrow{\frac{P_{2}}{\widetilde{P_{2}}}}\Leftrightarrow{\frac{\widetilde{P_{2}}}{P_{1}}}; \quad  
   \frac{P_{2}}{P}\Leftrightarrow{\frac{P}{P_{1}}}\Leftrightarrow{\frac{P_{1}}{\widetilde{P_{2}}}}\Leftrightarrow{\frac{\widetilde{P_{2}}}{P_{2}}};\quad \frac{\widetilde{P}}{P_{2}}\Leftrightarrow{\frac{P_{1}}{\widetilde{P_{2}}}}
   \end {equation} 
   Similarly, face $F'$ determines the mappings
   \begin{equation*}
   \widehat{F'}: {\widehat{\widetilde{P_{1}}}}\leftrightarrow{\widehat{\widetilde{P}}}\rightarrow{\widehat{\widetilde{P_{2}}}}\leftrightarrow{\widehat{P_{2}}}
   \end{equation*}
   and   we obtain identifications analogous to ones given by $(3.2)$. Since the vertices of the spin graph $\mathcal{S}_{P}$ have different $\epsilon$-degrees not all chains in the graph are $\epsilon$-admissible.  We observe that the composition $\widehat{F'}\circ{\widehat{F}}$ acting on $\widehat{P_{2}}\cong{\{1,2,3\}_{P_{2}}}$ along the  loop $\mathsf{L}_{P_{2}}={P_{2}\widetilde{P_{2}}P_{2}}$ produces the permutation $(123)$, whereas the composition $\widehat{F}\circ{\widehat{F'}}$ along the same loop, produces the inverse permutation $(132)$. Thus we obtain that $\mathtt{G}_{P_{2}}\cong{\mathtt{C}_{3}}={\mathtt{A}_{3}}$. The similar situation occurs when we consider  the vertex $\widetilde{P_{2}}$.  In other words we will have $\mathtt{G}_{\widetilde{P_{2}}}\cong{\mathtt{C}_{3}}$ as well. On the other hand, any admissible spin chain at the remaining vertices has to contain edges only of a sigle face. Consequently, the spin groups attached to these vertices have to be trivial.

  $\mathbf{Ad(2)}$. The connection graph is given by the Pict.5b. It has the following quadrilateral faces:
  \begin{equation*}
  F_{1}=PP_{1}{\widetilde{P_{1}}}P_{2},\quad F_{1}'=\widetilde{P}\widetilde{P_{1}}P_{1}{\widetilde{P_{2}}} \quad F_{2}=P_{2}{\widetilde{P_{2}}}P_{1}P, \quad F_{2}'=\widetilde{P_{2}}P_{2}{\widetilde{P_{1}}}{\widetilde{P}}
  \end{equation*}
  and $F_{3}=P_{1}{\widetilde{P_{1}}}P_{2}{\widetilde{P_{2}}}=F_{3}'$. Only the vertices of $F_{3}$ have the same $\epsilon$-degrees and hence $F_{3}$ determines bijections between all sets $\widehat{Q}$,  $Q\in{\{F_{3}\}}$. 
  
  The vertices of the remaining faces have different $\epsilon$-degrees and the mappings which they determine are analogous to those considered  in the previous case $(1)$ above.  Let as consider an $\epsilon$-admissible chain at $\widetilde{P_{1}}$ that produces the bijections
  \begin{equation*} {\widehat{\widetilde{P_{1}}}}\stackrel{\widehat{F_{1}}}{\rightarrow}{\widehat{P_{1}}}\stackrel{\widehat{F_{3}}}{\rightarrow}{\widehat{\widetilde{P_{1}}}}
  \end{equation*} 
  We may check that the composition ${\widehat{F_{3}}}\circ{\widehat{F_{1}}}$ of this mappings produces the bijection of the set ${\widehat{\widetilde{P_{1}}}}\cong{\{1,2,3\}_{\widetilde{P_{1}}}}$ which corresponds to the permutation $(12)\in{\mathtt{S}_{3}(\widehat{\widetilde{P_{1}}})}$. When we consider the same loop $\mathsf{L}_{\widetilde{P_{1}}}$ but, instead of the face $F_{1}$ along the edge $\widetilde{P_{1}}P_{1}$, we will choose the face $F_{1}'$ then the composition ${\widehat{F_{3}}}\circ{\widehat{F_{1}'}}$ produces the permutation $(13)\in{\mathtt{S}_{3}(\widehat{\widetilde{P_{1}}})}$. This implies that the spin group $\mathtt{G}_{\widetilde{P_{1}}}$ is isomorphic to the symmetry group $\mathtt{S}_{3}$. We proceed similarly with the remaining vertices whose $\epsilon$-degree is equal to $r+1=3$ and we obtain that $\mathtt{G}_{P_{l}}\cong{\mathtt{G}_{\widetilde{P}_{l}}}\cong{\mathtt{S}_{3}}$ for $l=1,2$.  On the other hand, since the $\epsilon$-admissible loops at $P$ as well as at ${\widetilde{P}}$ are trivial, we have $\mathtt{G}_{P}\cong{\mathtt{G}_{\widetilde{P}}}\cong{\mathbb{I}}$.

  $\mathbf{Ad(3)}$.  Now $k_{l}\geq{2}$ for all $l=0,1,2$. The connection graph  ( given by the  Pict.5c)   is symmetric with respect to all of its vertices.  Hence,since   any  vertex $Q$ has the $\epsilon$-degree  equal to $r+1=3$,  all lopps $\mathsf{L}_{Q}$ in $\mathcal{S}_{P}$  and all chains are  $\epsilon$-admissible.  Let us consider (for example) the face  $F_{1}=P{\widetilde{P}}{\widetilde{P_{2}}}P_{1}$ which determines bijections
  \begin{equation*}
  \widehat{F}_{1}: \widehat{P}\leftrightarrow{\widehat{\widetilde{P}}}\leftrightarrow{\widehat{\widetilde{P_{2}}}}\leftrightarrow{\widehat{P_{1}}}
  \end{equation*}
  and face $F_{2}=P{\widetilde{P}}{\widetilde{P_{2}}}P_{2}$  producing the bijections
  \begin{equation*}
  \widehat{F_{2}}: {\widehat{P}}\leftrightarrow{\widehat{\widetilde{P}}}\leftrightarrow{\widehat{\widetilde{P_{2}}}}\leftrightarrow{\widehat{P_{2}}}
  \end{equation*}
   We see that the spin chain
   \begin{equation*}
    {\widehat{P_{1}}}\stackrel{\widehat{F_{1}}}{\rightarrow}{\widehat{\widetilde{P}}}\stackrel{\widehat{F_{2}}}{\rightarrow}{\widehat{P_{1}}}
    \end{equation*}
     produces the odd permutation $(23)$  of the set $\widehat{P_{1}}\cong{\{1,2,3\}_{P_{1}}}$, i.e.$(23)\in{\mathtt{S}_{3}(\widehat{P_{1}})}$  whereas, for example,  the spin-chain 
     \begin{equation*}
      {\widehat{P}}\stackrel{\widehat{F_{1}}}{\rightarrow}{\widehat{\widetilde{P_{2}}}}\stackrel{\widehat{F_{2}}}{\rightarrow}{\widehat{P}}
    \end{equation*}
       produces the even permutation  $(123)$ of the set ${\widehat{P}}\cong{\{1,2,3\}_{P}}$ which belongs to ${\mathtt{S}_{3}(\widehat{P})}$. Thus, for any vertex $Q$ of the spin graph $\mathcal{S}_{P}$,  by considering a few spin-chains at this vertex $Q$,  we obtain generators of the whole symmetry group acting on the set ${\widehat{Q}}\cong{\{1,2,3\}_{Q}}$. Summarizing, when the connection graph is as on Pict.5c then  the spin group   $\mathtt{G}_{Q}$ associated to each vertex $Q\in{\{\mathcal{S}_{P}\}}$ is $\mathtt{G}_{Q}\cong{\mathtt{S}_{3}}$.
   
  \end{proof}

\section{SPIN GROUPS FOR GRAPHS OF DEGREE $r=3$ }
When the degree $r$ of an exceptional spin graph $\mathcal{S}_{P}$ is greater than two then the situation changes drastically. Namely, when $r=2$ then any additional edge between conjugate vertices  "`kills"' the unique cell of the basic graph $\mathcal{S}(r)={\mathcal{S}(2)}\subset{\mathcal{S}_{P}}$. In the contrary to this, when $r\geq{3}$ then each additional edge between conjugate vertices produces faces that are additional  to the faces of the basic graph. 

Let $P$ be a head of an exceptional spin  graph $\mathcal{S}_{P}$ of order $r=3<g$.  This means that the integral divisor $\mathcal{A}_{P}$ (determined by the section $\sigma_{P}$ of $\xi_{\epsilon}$)  has the form $\mathcal{A}_{P}={\widetilde{P}^{i}}P^{k_{1}}_{1}P^{k_{2}}_{2}{P^{k_{3}}_{3}}$.  We must have  $1\leq{k_{0}=i+1}\leq{k_{1}}\leq{k_{2}}\leq{k_{3}}$  and $i+k_{1}+k_{2}+k_{3}=g\geq{4}$.  As before, we will use tha notation: $k_{1}=k_{0}+p_{1}$, $k_{2}=k_{1}+p_{2}$, and $k_{3}=k_{2}+p_{3}$ which will allow us to write the class of graphs isomorphic to $\mathcal{S}_{P}$ as $\mathcal{S}_{i,p_{1},p_{2},p_{3}}$. Moreover we have
   \begin{equation}
   0\leq{i}\leq{\left\lfloor \frac{g-3}{4}\right\rfloor} \quad \text{and}\quad g=4i+3p_{1}+2p_{2}+p_{3}+3
   \end{equation}
   Since the degree of an exceptional spin graph $\mathcal{S}_{P}$ is $r=3$ its basic graph has the same form as the standard spin graph on a surface of genus $3$. Let $F$ be any face of the basic graph  i.e. let $F\subset{\mathcal{S}(3)}\subset{\mathcal{S}_{P}}$.  Suppose that $F=Q_{1}Q_{2}Q_{3}Q_{4}$.    Let ${\widehat{\widehat{Q_{l}}}}={\widehat{Q_{l}}}$ when $deg_{\epsilon}Q_{l}=r=3$ and let ${\widehat{\widehat{Q_{l}}}}={\widehat{Q_{l}}-{\frac{\widetilde{Q_{l}}}{Q_{l}}}}$  when   $Q_{l}$ has  $\epsilon$-degree  equal to $r+1=4$.
       The mappings ${\widehat{F}}$ restricted to the sets $\widehat{\widehat{Q_{l}}}$, $l=1,2,3,4$, are exactly the same as in a standard case.  More precisely,  we have
 \begin{equation}
 \widehat{F}:  {\widehat{\widehat{Q_{1}}}}\leftrightarrow{\widehat{\widehat{Q_{2}}}}\leftrightarrow{\widehat{\widehat{Q_{3}}}}\leftrightarrow{\widehat{\widehat{Q_{4}}}}
 \end{equation}

    Moreover, for any  two  vertices of $F$, say $Q_{l}$ and $Q_{k}$, $l\neq{k}$, with $deg_{\epsilon}Q_{l}=deg_{\epsilon}Q_{k}=4$   the bijection ${\widehat{\widehat{Q_{l}}}}\stackrel{\widehat{F}}{\leftrightarrow}{\widehat{\widehat{Q_{k}}}}$ can be naturally extended to   the (also denoted by $\widehat{F}$) bijection
   \begin{equation}
   \widehat{F}: {\widehat{Q_{l}}}\leftrightarrow{\widehat{Q_{k}}} \quad \text{with} \quad {\frac{\widetilde{Q_{l}}}{Q_{l}}}\Leftrightarrow{\frac{\widetilde{Q_{k}}}{Q_{k}}}
   \end{equation}
   In other words, any face $F\subset{\mathcal{S}(3)}\subset{\mathcal{S}_{P}}$ produces the bijection ${\widehat{Q_{i}}}\leftrightarrow{\widehat{Q_{j}}}$
   for any two of its vertices $Q_{i}$ and $Q_{j}$ with the same $\epsilon$-degree.  Besides, we see that all bijections and all groups of permutations produced by the basic graph $\mathcal{S}(r)\subset{\mathcal{S_{P}}}$ are inherited by the sets $\widehat{Q}$ (when $deg_{\epsilon}Q=r$) or by the sets $\widehat{\widehat{Q}}={\widehat{Q}}-{\frac{\widetilde{Q}}{Q}}$ (when $deg_{\epsilon}Q={r+1}$) respectively.
\begin{definition}
 Let ${\mathsf{W}_{Q}}=({\mathsf{L}_{Q}};F_{1},\ldots,F_{N+1})$ be a spin chain at a vertex $Q$.  When  $\mathsf{L}_{Q}\subset{\mathcal{S}(r)}$ as well as when all faces $F_{l}\subset{\mathcal{S}(r)}$ for  $l=1,\ldots,N+1$, then we say that the spin chain $\mathsf{W}_{Q}$ is a basic spin chain.
 \end{definition}
 Using $(4.2)$ and exactly the same considerations as in [2] we see that all basic chains at any vertex $Q$ of $\mathcal{S}_{P}$ produce the subgroup of the spin group $\mathtt{G}_{Q}$ which is isomorphic to the alternating group $\mathtt{A}_{3}$ (i.e. to the standard spin group of genus $3$) acting on the set $\widehat{\widehat{Q}}\subseteq{\widehat{Q}}$.

When a spin graph is a standard or a Weierstrass graph then each spin- chain $\mathsf{W}_{Q}=({\mathsf{L}_{Q}}; F_{1},{\ldots},F_{N+1})$ is $\epsilon$-admissible (so we do not even mention the word "`admissibility"'). The situation is different for exceptional graphs. When a graph has degree $r=2$ then a chain at a vertex $Q$ is $\epsilon$-admissible  when all of its vertices have the same $\epsilon$-degree.

For exceptional graphs of degree $r\geq{3}$ the situation is more complicated.
 Suppose that face $F$ does not belong to the basic graph of $\mathcal{S}_{P}$. This means that at least one or possibly two pairs of vertices of $F$ are mutually conjugate (we notice that the degree of each of these conjugate vertices must be $4$).  When set $\{F\}$ of vertices contains only one pair of conjugate points, say $\{S,\widetilde{S}\}\subset{\{F\}}$, then for any vertex $Q$ of $F$ we will define the set ${{\widehat{Q}}^{F}}\subseteq{\widehat{Q}}$ as follows:
\begin{definition}
Let F be a face of an exceptional spin graph of degree $r=3$ with vertices ${\{F\}}={\{R_{1},R_{2},R_{3},{\widetilde{R_{3}}}\}}$,  where $R_{1}$ and $R_{2}$ are not conjugate. Let $F'$ denote the face whose vertices are ${\{F'\}}={\{{\widetilde{R_{1}}},{\widetilde{R_{2}}},{{R_{3}}},{\widetilde{R_{3}}}\}}$. For each $l=1,2$, when $deg_{\epsilon}R_{l}=3$ then 
\begin{equation*}
{\widehat{R_{l}}}^{F}={\widehat{R_{l}}} \quad \text{and} \quad {\widehat{\widetilde{R_{l}}}}^{F'}={\widehat{\widetilde{R_{l}}}}
\end{equation*}
 For each $l=1,2,3$,  when  $deg_{\epsilon}R_{l}=4$, then 
\begin{equation*}
{\widehat{R_{l}}}^{F}={\widehat{R_{l}}}-{\frac{\widetilde{R_{4}}}{R_{l}}} \quad \text{and} \quad {\widehat{\widetilde{R_{l}}}}^{F}={\widehat{\widetilde{R_{l}}}}-{\frac{R_{4}}{\widetilde{R_{l}}}}
\end{equation*}
Here, the set $\{R_{4},{\widetilde{R_{4}}}\}$ of verices is uniquely determined by the condition that its intersection  with $\{F\}$ as well as  with $\{F'\}$ is empty (i.e. ${\{\mathcal{S}_{P}\}}={\{R_{4},{\widetilde{R_{4}}}\}}\cup{\{F\}}\cup{\{F'\}}$). 

Besides,  whenever $R_{l}\in{F\cap{F'}}$ then ${\widehat{R_{l}}}^{F}={\widehat{R_{l}}}^{F'}$ and the same is true for $\widetilde{R_{l}}$.

\end{definition}
When  face $F$ has two pairs of mutually conjugate points as the set of its vertices then  all of its vertices must have $\epsilon$-degree equal to $4$. 
\begin{definition}
Suppose that the set  of vertices of a face $F\subset{\mathcal{S}_{P}}$ consists of two pairs of conjugate points, say, ${\{F\}}={\{S_{1},{\widetilde{S_{1}}},S_{2},{\widetilde{S_{2}}}\}}$. Let ${\{S_{3},S_{4},{\widetilde{S_{3}}},
{\widetilde{S_{4}}}\}}$ denote the set of the remaining vertices of the spin graph $\mathcal{S}_{P}$.  We will define bijections
\begin{equation}
 {\widehat{F}}: {\widehat{S_{1}}}\leftrightarrow{\widehat{\widetilde{S_{1}}}}\leftrightarrow{\widehat{S_{2}}}\leftrightarrow{\widehat{\widetilde{S_{2}}}}
 \end{equation}
 by assuming the following, quite obvious correspondences:
\begin{equation*}
{\widehat{F}}:  {\frac{\widetilde{S_{3}}}{S_{1}}}\Leftrightarrow{\frac{S_{3}}{\widetilde{S_{2}}}}\Leftrightarrow{\frac{S_{3}}{\widetilde{S_{1}}}}\Leftrightarrow{\frac{\widetilde{S_{3}}}{S_{2}}}\quad \text{and} \quad {\frac{S_{4}}{\widetilde{S_{1}}}}\Leftrightarrow{\frac{\widetilde{S_{4}}}{S_{2}}}\Leftrightarrow{\frac{S_{4}}{\widetilde{S_{2}}}}\Leftrightarrow{\frac{\widetilde{S_{4}}}{S_{1}}}
\end{equation*}
 together with the natural mapings 
\begin{equation*}
{\widehat{F}}: {\frac{S_{1}}{\widetilde{S_{1}}}}\Leftrightarrow{\frac{\widetilde{S_{1}}}{S_{2}}}\Leftrightarrow{\frac{S_{2}}{\widetilde{S_{2}}}}\Leftrightarrow{\frac{\widetilde{S_{2}}}{S_{1}}} \quad \text{and} \quad  {\frac{S_{1}}{\widetilde{S_{2}}}}\Leftrightarrow{\frac{\widetilde{S_{2}}}{S_{2}}}\Leftrightarrow{\frac{S_{2}}{\widetilde{S_{1}}}}\Leftrightarrow{\frac{\widetilde{S_{1}}}{S_{1}}}
\end{equation*}
which we read from the connection graph for $\mathcal{S}_{P}$.
 \end{definition}

Let $Q$ be any vertex of an exceptional spin graph $\mathcal{S}_{P}$ of degree $r=3<g$. Let $\mathsf{W}_{Q}$ be a chain at $Q$ of the form
\begin{equation}
{\mathsf{W}_{Q}}:\quad {Q}\stackrel{F_{1}}{\rightarrow}{Q_{1}}\stackrel{F_{2}}{\rightarrow}{Q_{2}}\stackrel{F_{3}}{\rightarrow}{\ldots}\stackrel{F_{N}}{\rightarrow}{Q_{N}}\stackrel{F_{N+1}}{\rightarrow}{Q}  
\end{equation}
where faces above the arrows indicate choices along appropriate edges respectively. We should consider the following two cases separately:

$\mathbf{CASE}$ $\mathbf{I}$. All vertices of an exceptional graph $\mathcal{S}_{P}$ have degree 4 i.e. the connection graph for $\mathcal{S}_{P}$ is totally symmetric with all mutually conjugate vertices connected. This implies that for any face $F\subset{\mathcal{S}_{P}}$ and for any of its edge $RS\subset{F}$ we have well defined bijection ${\widehat{F}}: {\widehat{R}}\leftrightarrow{\widehat{S}}$. Hence, for all chains $(4.5)$ we have ${\widehat{Q_{l}}}={\{1,2,3,4\}_{Q_{l}}}$ for $l=1,\ldots,N$, and ${\widehat{Q}}={\{1,2,3,4\}_{Q}}$ and all compositions of indicated isomorphisms between the appropriate $4$-element sets are well defined.
\begin{equation}
{\widehat{\mathsf{W}_{Q}}}: {\widehat{Q}}\stackrel{\widehat{F_{1}}}{\rightarrow}{\widehat{Q_{1}}}\stackrel{\widehat{F_{2}}}{\rightarrow}{\widehat{Q_{2}}}\stackrel{\widehat{F_{3}}}{\rightarrow}{\ldots}\stackrel{\widehat{F_{N}}}{\rightarrow}{\widehat{Q_{N}}}\stackrel{\widehat{F_{N+1}}}{\rightarrow}{\widehat{Q}}
\end{equation}
Hence, all chains are admissible and each of them produces a concrete permutation $\sigma^{\mathsf{W}_{Q}}\in{\mathtt{S}_{4}(\widehat{Q})}$.

$\mathbf{CASE}$ $\mathbf{II}$. Not all vertices of $\mathcal{S}_{P}$ have degree equal to $4$, i.e. there is at least one pair of conjugate vertices that is not connected in $\mathcal{S}_{P}$. Now we can consider   isomorphisms determined by faces $F_{l}$, $l=1,\ldots,N$, in the sequence $(4.5)$  only as the isomorphisms between appropriate   $3$ element sets  ${\widehat{Q_{l}}^{F_{l}}}$, see remark below.  More precisely, the spin groups attached to the vertices of the graph $\mathcal{S}_{P}$ are produced only by $\epsilon$-admissible chains.  When chain $(4.5)$ is not admissible then we will assume that it does not move the elements of $\widehat{Q}$.
\newtheorem{remark}{Remark}
\begin{remark}\hfill
Let $Q$ be a vertex of an exceptional spin graph $\mathcal{S}_{P}$ of degree $r={3}$.  Let $\mathsf{W}_{Q}$ be a spin-chain $\mathsf{W}_{Q}=({\mathsf{L}_{Q}}; F_{1},{\ldots},F_{N+1})$ at $Q$  as in $(4.5)$.  
\begin{itemize}
\item When all vertices of $\mathcal{S}_{P}$ have degree equal to $4$ then any chain $(4.5)$ is admissible. 
\end {itemize}
Suppose that for each face $F_{l}$ in $(4.5)$  the set $\{F_{l}\}$ of its vertices consist of at most one pair of mutually conjugate points. 
\begin{itemize}
\item  If the degree of a vertex $Q$ is equal to $3$ then a chain $\mathsf{W}_{Q}$ at a vertex $Q$  is admissible when  we have ${\widehat{Q_{l}}}^{F_{l}}={\widehat{Q_{l}}}^{F_{l+1}}$  for any $l=1,2,\ldots,N$.
\item   When ${deg_{\epsilon}Q}=4$ then a chain $(4.5)$ at $Q$ is admissible if ${\widehat{Q_{l}}}^{F_{l}}={\widehat{Q_{l}}}^{F_{l+1}}$ for any $l=1,2,\ldots,N-1$.
\end{itemize}
\end{remark}

We observe that in the latter case   we may have  ${\widehat{Q_{N}}}^{F_{N}}\neq{\widehat{Q_{N}}}^{F_{N+1}}$.  Since this is possible only when the degree of $Q_{N}$ is the same as the degree of $Q$ (i.e. it is equal to $4$) our chain $\mathsf{W}_{Q}$ produces an isomorphism (given by appropriate compositions) between the $3$-elements  sets ${\widehat{Q}^{F_{1}}}$ and ${\widehat{F_{N}}(\widehat{Q_{N}})^{F_{N}}}\subset{\widehat{Q}}$   wich extends to the isomorphism ${\widehat{Q}}\rightarrow{\widehat{Q}}$  in the obvious way.

Let $\mathcal{S}_{P}$ be an exceptional spin graph of  order $r=3$ with a head $P$ and with ${\widehat{k}}={\widehat{k}(P)}={(k_{0},k_{1},k_{2},k_{3})}$. To find the spin groups attached to the vertices of $\mathcal{S}_{P}$ we will proceed by considering different possible connection graphs for $\mathcal{S}_{P}$ separately.

   \subsection{All $k_{l}$, for $l=0,1,2$ are equal to $1$.} In this case we must have $k_{3}={g-2}\geq{2}$ and the $\epsilon$-degrees of $P_{3}$ and $\widetilde{P_{3}}$ must be $4$. The degrees af all remaining vertices  of the spin graph are equal to $3$.  The connection graph for $\mathcal{S}_{P}$ has the form showed by the Pict.7. The faces 
   \begin{equation}
   F_{1}=PP_{1}{\widetilde{P_{3}}}P_{2};\quad F_{2}=PP_{1}{\widetilde{P_{2}}}P_{3};\quad F_{3}=P_{1}{\widetilde{P_{3}}}{\widetilde{P}}{\widetilde{P_{2}}}
   \end{equation}
   as well as and the faces
   \begin{equation}
   F_{1}'={\widetilde{P}}{\widetilde{P_{1}}}P_{3}{\widetilde{P_{2}}}; \quad F_{2}'={\widetilde{P}}{\widetilde{P_{1}}}P_{2}{\widetilde{P_{3}}}; \quad F_{3}'={\widetilde{P_{1}}}P_{3}PP_{2}
   \end{equation}
   are the faces of the basic graph $\mathcal{S}(3)\subset{\mathcal{S}_{P}}$ and  all mappings determined by them were already discussed above (see $(4.2)$) and $(4.3)$. However, the additional edge $P_{3}\widetilde{P_{3}}$ of $\mathcal{S}_{P}$ gives rise to the following additional faces for the connection graph of $\mathcal{S}_{P}$:
   \begin{equation}
   F_{4}=PP_{1}{\widetilde{P_{3}}}P_{3} \quad F_{5}=PP_{2}{\widetilde{P_{3}}}P_{3} \quad F_{6}=P_{1}{\widetilde{P_{2}}}P_{3}{\widetilde{P_{3}}}
   \end{equation} 
   and
   \begin{equation}
   F_{4}'={\widetilde{P}}{\widetilde{P_{1}}}P_{3}{\widetilde{P_{3}}} \quad F_{5}'={\widetilde{P}}{\widetilde{P_{2}}}P_{3}{\widetilde{P_{3}}} \quad F_{6}'={\widetilde{P_{1}}}P_{2}{\widetilde{P_{3}}}P_{3}
   \end{equation}

   \begin{pspicture}(-2,-2)(6,4)
   \psline[showpoints=true, linestyle=dashed]%
   (-1,0)(1,0)(2,1)
   \psline[showpoints=true, linestyle=dashed]%
   (-1,2)(1,2)(2,3)
   \psline[showpoints=true, linestyle=dashed]%
   (0,3)(0,1)
   \psline[linestyle=dashed]%
   (-1,0)(0,1)(2,1)
   \psline[linestyle=dashed]%
   (-1,2)(0,3)(2,3)
   \psline[linestyle=dashed]%
   (1,0)(1,2)
   \psline[linestyle=dashed]%
   (2,1)(2,3)
   \psline[linestyle=dashed]%
   (-1,0)(-1,2)
   \psline[linestyle=dashed]%
   (-1,2)(2,1)
   
   \rput(-0.1,-1){\rnode{A}{Pict.7}}
   \rput(4.5,2){\rnode{B}{$k_{0}={k_{1}}={k_{2}}=1$}}
   \rput(4.5,1.5){\rnode{C}{$k_{3}={g-3}\geq{2}$}}
   
   \rput(-0.3,3.1){\rnode{a}{$\widetilde{P_{1}}$}}
   \rput(2.3,3.1){\rnode{b}{$\widetilde{P}$}}
   \rput(-0.3,1.2){\rnode{c}{$P_{2}$}}
   \rput(2.3,1.2){\rnode{d}{$\widetilde{P_{3}}$}}
   \rput(-1.3,2.2){\rnode{e}{$P_{3}$}}
   \rput(0.7,2.3){\rnode{f}{$\widetilde{P_{2}}$}}
   \rput(-1.3,-0.1){\rnode{g}{$P$}}
   \rput(1.3,-0.1){\rnode{h}{$P_{1}$}}

   \end{pspicture}

  Let us consider an admissible (but not a basic) chain $\mathsf{W}_{P}=({\mathsf{L}_{P}}=PP_{2}P; F_{5},F_{4})$. It produces the mappings
  \begin{equation}
  {\widehat{P}}\stackrel{\widehat{F_{5}}}{\rightarrow}{\widehat{P_{2}}}\stackrel{\widehat{F_{4}}}{\rightarrow}{\widehat{P}}
  \end{equation}
 whose composition corresponds to the odd permutation $(13)\in{\mathtt{S}_{3}(\widehat{P})}$ of the set ${\widehat{P}}={\{{\frac{P_{1}}{P}},{\frac{P_{2}}{P}},{\frac{P_{3}}{P}}\}}\cong{\{1,2,3\}_{P}}$. Since basic chains produce a group of permutations of the set $\widehat{P}$ isomorphic to the alternating group we see that the whole spin group $\mathtt{G}_{P}={\mathtt{S}_{3}}(\widehat{P})$ is isomorphic to the whole symmetry group $\mathtt{S}_{3}$.
 
  For any vertex $Q$ of $\mathcal{S}_{P}$ with  the degree $deg_{\epsilon}Q=3=r$ the situation is analogous.  Hence,for each $Q\in{\{\mathcal{S}_{P}\}}$ with $Q\notin{\{P_{3},{\widetilde{P_{3}}}\}}$  the spin group $\mathtt{G}_{Q}$ is  isomorphic to the symmertry group $\mathtt{S}_{3}$ 
  
 On the other hand, it is easy to see that a non-admissible chain at $P$ with ${\mathsf{L}_{P}}={PP_{3}P}$ and with faces $F_{5}$ and $F_{2}$ along its edges respectively, does not lead to any permutation of the set ${\{1,2,3\}_{P}}\cong{\widehat{P}}$. The reason for this is that the bijection  ${\widehat{F_{5}}}:{\widehat{P}}\leftrightarrow{\widehat{P_{3}}}^{F_{5}}$ does not allow for any composition with ${\widehat{F_{2}}}:{\widehat{\widehat{P_{3}}}}\leftrightarrow{\widehat{P}}$ (in other words, we have ${\widehat{P_{3}}}^{F_{5}}\neq{\widehat{P_{3}}}^{F_{2}}$ ).
 
 Now let us find the spin group at the vertex $P_{3}$. This point has degree 4 and we have
 \begin{equation*} {\widehat{P_{3}}}={\{{\frac{P}{P_{3}}},{\frac{\widetilde{P_{1}}}{P_{3}}},{\frac{\widetilde{P_{2}}}{P_{3}}},{\frac{\widetilde{P_{3}}}{P_{3}}}\}}\cong{\{1,2,3,4\}_{P_{3}}} 
 \end{equation*}
 Let us consider an admissible chain at $P_{3}$ given by ${\mathsf{W}_{P_{3}}}=({\mathsf{L}_{P_{3}}}={P_{3}\widetilde{P_{3}}P_{3}};F_{4},F_{5})$. Since each of these faces has only one pair of conjugate vertices, we use the definition $6$ to find the appropriate bijections.  We find that this particular chain produces a permutation $(234)$ of the set $\{1,2,3,4\}_{P_{3}}\cong{\widehat{P_{3}}}$. For some other admissible chain, for example for $\mathsf{W}_{P_{3}}=({\mathsf{L}_{P_{3}}=P_{3}{\widetilde{P_{1}}}{\widetilde{P}}{\widetilde{P_{3}}}P_{3}};F_{1}',F_{1}',F_{3},F_{5})$ we obtain an odd permutation $(1234)$ of the set $\widehat{P_{3}}$. Hence, since all basic chains produce the alternating group acting on the set ${\widehat{\widehat{P_{3}}}}\subset{\widehat{P_{3}}}$, we see that the spin group at $P_{3}$ is the whole symmetry group, i.e. $\mathtt{G}_{P_{3}}={\mathtt{S}_{4}}(\widehat{P_{3}})$. 
 
 We will find the similar result for the spin group at $\widetilde{P_{3}}$. Summarizing, when $k_{l}=1$ for all $l=0,1,2$ then the spin groups attached to any vertex $Q$ of an exceptional spin graph $\mathcal{S}_{P}$ of degree $r=3<g$ are:
 \begin{equation}
 \begin{cases}
 \mathtt{S}_{3}, &\text{when} \quad Q\notin{\{P_{3},\widetilde{P_{3}}\}}\\
 \mathtt{S}_{4}, &\text{when} \quad Q{\{P_{3},\widetilde{P_{3}}\}}
 \end{cases}
 \end{equation}
 
 \subsection{Assume that $k_{0}=k_{1}=1<k_{2}\leq{k_{3}}$}. In this case two pairs of conjugate vertices are connected and the connection graph of $\mathcal{S}_{P}$ is given by the Pict.8.  Now, besides of the faces $F_{l}$ and $F_{l}'$ for $l=1,\ldots,6$ given by $(4.7)-(4.10)$ the connection graph for $\mathcal{S}_{P}$ has the following, additional faces:
 \begin{equation}
 F_{7}=PP_{3}{\widetilde{P_{2}}}P_{2},\quad F_{8}=PP_{1}{\widetilde{P_{2}}}P_{2}, \quad F_{9}=P_{1}{\widetilde{P_{3}}}P_{2}{\widetilde{P_{2}}} \quad F_{10}=P_{3}{\widetilde{P_{2}}}P_{2}{\widetilde{P_{3}}}
 \end{equation}
 and 
 \begin{equation}
 F_{7}'={\widetilde{P}}{\widetilde{P_{3}}}P_{2}{\widetilde{P_{2}}} \quad F_{8}'={\widetilde{P}}{\widetilde{P_{1}}}P_{2}{\widetilde{P_{2}}} \quad F_{9}'={\widetilde{P_{1}}}P_{3}{\widetilde{P_{2}}}P_{2} \quad F_{10}'=F_{10}
 \end{equation}
 
 Whenever  spin graph $\mathcal{S}_{P}$ has (at least) two pairs of  conjugate vertices  connected  then there exists  
    a face $F\subset{\mathcal{S}_{P}}$  whose set of vertices consists of two pairs of mutually conjugate points, say $\{F\}={\{S_{1},{\widetilde{S_{1}}},S_{2},{\widetilde{S_{2}}}\}}$. 
    
 \begin{pspicture}(-2,-2)(5,4)
 \psline[showpoints=true, linestyle=dashed]%
 (-1,0)(1,0)(2,1)
 \psline[showpoints=true, linestyle=dashed]%
 (-1,2)(1,2)(2,3)
 \psline[showpoints=true, linestyle=dashed]%
 (0,3)(0,1)
 \psline[linestyle=dashed]%
 (-1,2)(0,3)(2,3)
 \psline[linestyle=dashed]%
 (-1,0)(0,1)(2,1)
 \psline[linestyle=dashed]%
 (2,1)(2,3)
 \psline[linestyle=dashed]%
 (1,0)(1,2)
 \psline[linestyle=dashed]%
 (-1,0)(-1,2)
 \psline[linestyle=dashed]%
 (0,1)(1,2)
 \psline[linestyle=dashed]%
 (-1,2)(2,1)

 \rput(0.5,-1){\rnode{A}{Pict.8}}
 \rput(4.5,2){\rnode{B}{$k_{0}=k_{1}=1$}}
 \rput(4.5,1.5){\rnode{C}{$1<{k_{2}}\leq{k_{3}}$}}
 
 \rput(-0.3,3.1){\rnode{a}{$\widetilde{P_{1}}$}}
 \rput(2.3,3.1){\rnode{b}{$\widetilde{P}$}}
 \rput(-1.3,2.2){\rnode{c}{$P_{3}$}}
 \rput(0.7,2.3){\rnode{d}{$\widetilde{P_{2}}$}}
 \rput(-1.3,-0.1){\rnode{e}{$P$}}
 \rput(1.3,-0.1){\rnode{f}{$P_{1}$}}
 \rput(2.3,1.2){\rnode{g}{$\widetilde{P_{3}}$}}
 \rput(-0.3,1.2){\rnode{h}{$P_{2}$}}
 
 \end{pspicture}

    Such face $F$  does not define  the $3$-element set ${\widehat{Q}}^{F}$, for any of its vertex $Q\in{\{F\}}$. 
      However, $\widehat{F}$ determines bijections  $(4.4)$ between the all appropriate $4$-element sets $\widehat{Q}$, $Q\in{\{F\}}$. Hence,
       for any face ${\overline{F}}\subset{\mathcal{S}_{P}}$ with an edge $RQ$, ($Q\in{\{F\}}$) and with at most one pair of conjugate vertices   we have well defined composition  ${\widehat{F}}\circ{\widehat{\overline{F}}}$  . The isomorphism ${\widehat{\overline{F}}}: {\widehat{R}^{\overline{F}}}\rightarrow{\widehat{Q}}^{\overline{F}}$ has well defined composition with ${\widehat{F}}$ wich maps  the set ${\widehat{R}^{\overline{F}}}$ into an appropriate subset ${\widehat{F}}\circ{\widehat{\overline{F}}}({\widehat{R}}^{\overline{F}})$ of $\widehat{Q}$.

   In our case,  (see Pict.8)   only the face $F_{10}$ has two pairs of conjugate points as its vertices. Let
    \begin{equation}
    {\widehat{F_{10}}}: {\widehat{P_{2}}}\leftrightarrow{\widehat{\widetilde{P_{3}}}}\leftrightarrow{\widehat{P_{3}}}\leftrightarrow{\widehat{\widetilde{P_{2}}}}
    \end{equation}
  be bijections determined by $F_{10}$ according to our definition $7$. Suppose that we would like to consider a chain at $P$ given by
  \begin{equation}
  {\mathsf{W}_{P}}: P\stackrel{F_{3}'}{\rightarrow}P_{2}\stackrel{F_{10}}{\rightarrow}{\widetilde{P_{3}}}\stackrel{F_{6}}{\rightarrow}P_{1}\stackrel{F_{2}}{\rightarrow}P
  \end{equation}
  Now ${\widehat{F_{3}'}(\widehat{P})}={\widehat{\widehat{P_{2}}}}$ and ${\widehat{F_{10}}}$ determines bijections between the following $3$-element subsets :
  \begin{equation*}
  {\widehat{F_{10}}}: {\widehat{\widehat{P_{2}}}}\leftrightarrow{\widehat{\widetilde{P_{3}}}}^{F_{4}}={\widehat{\widetilde{P_{3}}}}^{F_{4}'}\leftrightarrow{\widehat{\widetilde{P_{2}}}}^{F_{8}}={\widehat{\widetilde{P_{2}}}}^{F_{8}'}\leftrightarrow{\widehat{\widehat{P_{3}}}}
  \end{equation*}
  Since we have ${\widehat{F_{10}}({\widehat{\widehat{P_{2}}}})}\neq{\widehat{\widetilde{P_{3}}}}^{F_{6}}$ the chain $(4.16)$ can not be admissible. However,  if instead of the face $F_{6}$ in $(4.16)$ we  take the face $F_{4}$ or ${F_{4}'}$ then our chain would become admissible. For example, when we  consider $F_{4}$ instead of $F_{6}$ in $(4.16)$ then the composition of isomorphisms determined by such chain is an isomorphism of the set ${\widehat{P}}\cong{\{1,2,3,\}_{P}}$ corresponding to the permutation $\sigma^{\mathsf{W}_{P}}=(123)$.

 \begin{remark}\hfill
Suppose that  spin graph $\mathcal{S}_{P}$  of degree $r=3$ has  two pairs  of conjugate vertices  that are connected. 
\begin{enumerate}
\item   Let all vertices of a loop $\mathsf{L}_{Q}$ have degrees equal to $4$. In such case, for any choices of faces along the edges a chain $\mathsf{W}_{Q}$ is admissible.
\item  The degree of $Q$ in $(4.5)$ is equal to $3$  and some $F_{l}\neq{F_{l+1}}$ has two pairs of conjugate vertices (it is possible only for $l>1$). Now, for $\mathsf{W}_{Q}$ o be admissible we must have 
\begin{equation*}
{\widehat{F_{l}}({\widehat{Q_{l-1}}}^{F_{l-1}})}={\widehat{Q_{l}}}^{F_{l+1}} \quad \text{for} \quad l=2,\ldots,N.
\end{equation*}
 When $F_{l}={F_{l+1}}$ then the latter condition becomes $\widehat{F}_{l}({\widehat{Q_{l-1}}}^{F_{l-1}})\stackrel{\widehat{F}_{l}}{\leftrightarrow}{\widehat{Q}_{l+1}}^{F_{l+2}}$.
\item  The degree of $Q$ is equal to $4$ and some of vertices of $\mathsf{L}_{Q}$ have $\epsilon$-degree equal to $3$.  Then $\mathsf{W}_{Q}$ is admissible if the same condition as in $(2)$ above are satisfied for $l=1,2,\ldots,N$.
\item If each face of $\mathsf{W}_{Q}$ has at most one pairs of conjugate vertices then we use the remark $1$.
\end{enumerate}
\end{remark}
Let us consider a chain at $P$ given by
\begin{equation*}
{\mathsf{W}_{P}}: P\stackrel{F_{3}'}{\rightarrow}P_{2}\stackrel{F_{10}}{\rightarrow}{\widetilde{P_{3}}}\stackrel{F_{10}}{\rightarrow}{P_{3}}\stackrel{F_{2}}{\rightarrow}{P}
\end{equation*}
Since ${\widehat{F_{10}}({\widehat{P_{2}}}^{F_{3}'})}\leftrightarrow{\widehat{P_{3}}}^{F_{2}}={\widehat{\widehat{P_{3}}}}$ this chain is admissible and produces an odd permutation ${\sigma^{\mathsf{W}_{P}}}=(13)$ acting on the set ${\widehat{P}}\cong{\{1,2,3\}_{P}}$.

Another admissible chain at $P$, ${\mathsf{W}_{P}}=({\mathsf{L}_{P}}={PP_{1}P};F_{8},F_{1})$ determines also the odd permutation $(13)$ acting on $\{1,2,3\}_{P}$. Since all basic chains at $P$ produce permutations of $\widehat{P}$ that form the group isomorphic to the alternating group $\mathtt{A}_{3}$ we see that the set of admissible chains at $P$ produce the whole symmetry group $\mathtt{S}_{3}(\widehat{P})$. In analogous way we may show that for any vertex $Q$ of $\mathcal{S}_{P}$ with ${deg_{\epsilon}Q}=3$ the spin group $\mathtt{G}_{Q}$ is isomorphic to the whole symmetry group, i.e. $\mathtt{G}_{Q}={\mathtt{S}_{3}}(\widehat{Q})$.

Now, let us suppose that the degree of the vertex $Q$ of a chain $\mathsf{W}_{Q}$ in $(4.5)$ is equal to $4$. For example, let $Q=P_{2}$. Let us consider
\begin{equation*}
\mathsf{W}_{P_{2}}:{P_{2}}\stackrel{F_{7}}{\rightarrow}{P}\stackrel{F_{2}}{\rightarrow}{P_{3}}\stackrel{F_{10}}{\rightarrow}{\widetilde{P_{2}}}\stackrel{F_{8}}{\rightarrow}{P_{1}}\stackrel{F_{9}}{\rightarrow}{\widetilde{P_{3}}}\stackrel{F_{10}}{\rightarrow}{P_{2}}
\end{equation*}
Since we have ${\widehat{F_{10}}({\widehat{\widehat{P_{3}}}})}={\widehat{\widetilde{P_{2}}}}^{F_{8}}$ and the compositions of all appropriate  mappings are well defined.  Our chain is admissible and produces the isomorphism of the set ${\widehat{P_{2}}}\cong{\{1,2,3,4\}_{P_{2}}}$ corresponding to the permutation ${\sigma^{\mathsf{W}_{P_{2}}}}=(243)$. For another admissible chain
\begin{equation*}
{\mathsf{W}_{P_{2}}}: {P_{2}}\stackrel{F_{10}}{\rightarrow}{\widetilde{P_{3}}}\stackrel{F_{2}'}{\rightarrow}{P_{2}}
\end{equation*}
we obtain the isomorphism of $\widehat{P_{2}}$ corresponding to the odd permutation $\sigma^{\mathsf{W}_{P_{2}}}=(1432)$ acting on the set ${\{1,2,3,4\}_{P_{2}}}\cong{\widehat{P_{2}}}$. Thus, since the basic chains produce ${\mathtt{A}_{3}(\widehat{\widehat{P_{2}}})}$ we see that the set of all admissible chains at $P_{2}$ result in the spin group ${\mathtt{G}_{P_{2}}}={\mathtt{S}_{4}}(\widehat{P_{2}})$.  For any other vertex of $\mathcal{S}_{P}$ of degree equal to $4$ we will obtain the similar result. Thus we have:
\begin{equation}
{\mathtt{G}_{Q}}\cong
\begin{cases}
\mathtt{S}_{3}, \quad \text{when}\quad {deg_{\epsilon}Q}=3\\
\mathtt{S}_{4}, \quad \text{when}\quad {deg_{\epsilon}Q}=4
\end{cases}
\end{equation}

\subsection{Let $k_{0}=1$ and let ${k_{0}}<{k_{1}}\leq{k_{2}}\leq{k_{3}}$}.
The connection graph is given by the Pict.9.

\begin{pspicture}(-2,-2)(5,4)
\psline[showpoints=true, linestyle=dashed]%
(-1,2)(1,2)(2,3)
\psline[showpoints=true, linestyle=dashed]%
(-1,0)(1,0)(2,1)
\psline[showpoints=true, linestyle=dashed]%
(0,3)(0,1)
\psline[linestyle=dashed]%
(-1,0)(0,1)(2,1)
\psline[linestyle=dashed]%
(-1,2)(0,3)(2,3)
\psline[linestyle=dashed]%
(2,3)(2,1)
\psline[linestyle=dashed]%
(1,2)(1,0)
\psline[linestyle=dashed]%
(-1,0)(-1,2)
\psline[linestyle=dashed]%
(0,3)(1,0)
\psline[linestyle=dashed]%
(0,1)(1,2)
\psline[linestyle=dashed]%
(-1,2)(2,1)

\rput(0.2,-1){\rnode{A}{Pict.9}}
\rput(4.5,2){\rnode{B}{$k_{0}=1$}}
\rput(4.5,1.5){\rnode{C}{$1<{k_{1}}\leq{k_{2}}\leq{k_{3}}$}}

\rput(-0.3,3.1){\rnode{a}{$\widetilde{P_{1}}$}}
\rput(2.3,3.1){\rnode{b}{$\widetilde{P}$}}
\rput(0.7,2.3){\rnode{c}{$\widetilde{P_{2}}$}}
\rput(-1.3,2.2){\rnode{d}{$P_{3}$}}
\rput(-0.3,1.2){\rnode{e}{$P_{2}$}}
\rput(2.3,1.2){\rnode{f}{$\widetilde{P_{3}}$}}
\rput(1.3,-0.1){\rnode{g}{$P_{1}$}}
\rput(-1.3,-0.1){\rnode{h}{$P$}}

\end{pspicture}

 We see that only the conjugate vertices $P$ and $\widetilde{P}$ are not connected.  In addition to the faces $F_{l}$, $F_{l}'$, $l=1,2,\ldots,10$,there are the followig new faces:
\begin{equation*}
{F_{11}}=PP_{1}{\widetilde{P_{1}}}P_{3}, \quad {F_{12}}=PP_{2}{\widetilde{P_{1}}}P_{1}, \quad {F_{13}}=P_{2}{\widetilde{P_{3}}}P_{1}{\widetilde{P_{1}}}, \quad {F_{14}}=P_{1}{\widetilde{P_{1}}}{\widetilde{P_{3}}}P_{3}={F_{14}'}
\end{equation*}
and
\begin{equation*}
{F_{11}'}={\widetilde{P}}{\widetilde{P_{1}}}P_{1}{\widetilde{P_{3}}}, \quad {F_{12}'}={\widetilde{P}}{\widetilde{P_{2}}}P_{1}{\widetilde{P_{1}}}, \quad {F_{13}'}={\widetilde{P_{2}}}P_{3}{\widetilde{P_{1}}}P_{1}, \quad {F_{15}}={\widetilde{P_{1}}}P_{2}{\widetilde{P_{2}}}P_{1}={F_{15}'} 
\end{equation*}
Suppose that $k\in{\{10,14,15\}}$  and that $l\in{\{1,\ldots,9\}}\cup{\{11,12,13\}}$.     All faces $F_{l}$ and $F_{l}'$ have at most  one pair of conjugate points as their vertices. Hence, similarly as before, any  face $F_{l}$   determines the $3$-element set ${\widehat{Q}}^{F_{l}}\subseteq{\widehat{Q}}$ for each of its vertex $Q\in{\{F_{l}\}}$  (and the same is true for $F_{l}'$).

 On the other hand, when vertex $Q$ of a face $F_{k}$ is also a vertex of $F_{l}$ then    the bijections $\widehat{F}_{k}$ (given by the definition $7$), determine unique subset   ${{\widehat{F_{k}}}({\widehat{Q}}^{F_{l}})}\subset{\widehat{Q}}$  which is isomorphic to ${\widehat{Q}}^{F_{l}}\subset{\widehat{Q}}$.

  Proceeding in a similar way as before we will find that all admissible chains at a vertex $Q$ produce the spin group $\mathtt{G}_{Q}$ which is attached at this vertex.  More precisely we obtain:
 \begin{equation}
 \mathtt{G}_{Q}\cong
 \begin{cases}
 \mathtt{S}_{3}, \quad \text{when} \quad Q\in{\{P,\widetilde{P}\}}\\
 \mathtt{S}_{4}, \quad \text{when} \quad Q\in{\{P_{k},{\widetilde{P_{k}}}; k=1,2,3\}}
 \end{cases}
 \end{equation}

As before, spin chains $(4.5)$ at $Q$ which are not admissible will not move the elements of the set $\widehat{Q}\cong{\{1,2,3\}_{Q}}$, when $Q\in{\{P,{\widetilde{P}}\}}$, or of the set $\widehat{Q}\cong{\{1,2,3,4\}_{Q}}$ when $Q$ is any of the remaining vertices of $\mathcal{S}_{P}$.

\subsection{ For each $l=0,1,2,3$  we have $k_{l}>1$}
Now each pair of conjugate vertices of $\mathcal{S}_{P}$ is connected  and hence each vertex has $\epsilon$-degree equal to $4$. The connection graph is totally symmetric with respect to all of its vertices and for any $Q\in{\{\mathcal{S}_{P}\}}$ any spin chain $\mathsf{W}_{Q}$ at the vertex $Q$ is admissible. It is obvious (from the considerations above) that all spin chains at $Q$ will produce the spin group at $Q$ which is isomorphic to $\mathtt{S}_{4}$, i.e. $\mathtt{G}_{Q}={\mathtt{S}_{4}}(\widehat{Q})$.

All cases considered above show that the following lemma is true:
\begin{lemma}
Let $\mathcal{S}_{P}$ be an exceptional spin graph of degree $r=3$ on a hyperelliptic surface of genus $g\geq{4}$. The spin group $\mathtt{G}_{Q}$ attached to any of its vertex $Q$ is
\begin{equation*}
\mathtt{G}_{Q}\cong
\begin{cases}
\mathtt{S}_{3}, \quad \text{when} \quad {deg_{\epsilon}Q}=3=r \\
\mathtt{S}_{4}, \quad \text{when} \quad {deg_{\epsilon}Q}=4=r+1
\end{cases}
\end{equation*}
\end{lemma}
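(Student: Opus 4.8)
The plan is to reduce the assertion to a finite case analysis over the possible connection graphs of an order-$3$ exceptional graph, and then, in each case, to exhibit explicit $\epsilon$-admissible spin chains that generate the full symmetric group on $\widehat{Q}$. I would begin from the reduction recorded just before the definition of the connection graph: the spin group $\mathtt{G}_Q$ is determined by the straight-edge connections alone, hence by the connection graph of $\mathcal{S}_P$, and edge multiplicities are irrelevant. It therefore suffices to classify the connection graphs. Because $P$ is a head, we have $1 \le k_0 \le k_1 \le k_2 \le k_3$ with $k_0 = i+1$ and $i + k_1 + k_2 + k_3 = g$, and a conjugate pair of vertices is joined exactly when the associated vertex has $deg_{\epsilon} = r+1 = 4$. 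By the monotonicity $k_0 \le k_1 \le k_2 \le k_3$, the set $\{l : k_l > 1\}$ has the form $\{m, m+1, \dots, 3\}$, so the case is fixed by the least index $m$ with $k_m > 1$. The value ``no such $m$'' (all $k_l = 1$) is impossible, since it forces $i = 0$ and $g = 3 = r$, contradicting $r < g$. Hence $m \in \{0,1,2,3\}$, and these four possibilities are precisely the configurations treated in the preceding subsections.

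For each configuration I would run the computation exactly as above: list the faces of the connection graph, invoke $(4.2)$ to see that the basic chains at any vertex $Q$ already reproduce the standard genus-$3$ spin group, i.e. the alternating group $\mathtt{A}_3$ acting on $\widehat{\widehat{Q}} \subseteq \widehat{Q}$, and then produce one non-basic but admissible chain whose permutation is odd---a transposition such as $(13)$ when $deg_{\epsilon}Q = 3$, and a $4$-cycle such as $(1234)$ or $(1432)$ when $deg_{\epsilon}Q = 4$. Since an alternating group together with any odd permutation generates the entire symmetric group, this forces $\mathtt{G}_Q \cong \mathtt{S}_3$ at the degree-$3$ vertices and $\mathtt{G}_Q \cong \mathtt{S}_4$ at the degree-$4$ vertices. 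In every one of the four configurations the outcome depends only on $deg_{\epsilon}Q$, so assembling them yields the stated dichotomy.

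The main obstacle is the admissibility bookkeeping when a loop $\mathsf{L}_Q$ runs through vertices of different $\epsilon$-degree. Using the definitions of the restricted sets $\widehat{Q}^{F}$ for faces carrying one and for faces carrying two conjugate pairs, together with Remarks $1$ and $2$, one must verify that each composition $\widehat{F_{l+1}} \circ \widehat{F_l}$ is defined---that is, that $\widehat{F_l}(\widehat{Q_l}^{F_l})$ agrees with the correct $3$-element subset $\widehat{Q_l}^{F_{l+1}}$---and simultaneously confirm both that the chosen odd-permutation chain is genuinely admissible and that the inadmissible chains contribute nothing (they fix $\widehat{Q}$ pointwise). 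The delicate points are the faces carrying two conjugate pairs, whose bijections $(4.4)$ must be threaded correctly, and the degree-$3$ vertices of the graphs in the first three configurations, where one must still realize a transposition by an admissible chain. Once these verifications are in place, the synthesis into the Lemma is immediate.
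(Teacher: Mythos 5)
Your proposal follows essentially the same route as the paper: reduce to the four possible connection graphs (indexed, as you note, by the least $m$ with $k_{m}>1$, which reproduces the paper's subsections $4.1$--$4.4$), observe that basic chains already yield $\mathtt{A}_{3}$ acting on $\widehat{\widehat{Q}}\subseteq{\widehat{Q}}$, and then adjoin one explicit $\epsilon$-admissible non-basic chain realizing an odd permutation, so that the alternating group together with an odd element generates all of $\mathtt{S}_{3}$ or $\mathtt{S}_{4}$ according to $deg_{\epsilon}Q$. This matches the paper's argument in both structure and the key admissibility bookkeeping, so the proposal is correct and not a genuinely different approach.
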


\section{SPIN GROUPS for GRAPHS od DEGREE $r>3$}
Let $\mathcal{S}_{P}$ be an exceptional spin graph of degree $r\geq{4}$ with a head $P$. Let $\mathcal{S}(r)$ be its basic graph and let ${\Gamma}\subset{\mathcal{S}(r)}\subset{\mathcal{S}_{P}}$ be any standard $3$-cell of the graph.  Since $\mathcal{S}_{P}$ is an exceptional graph it determines the unique 'decoration' ${\Gamma}^{\mathcal{S}}$ of this $3$-cell  $\Gamma$. More precisely, when  vertex $Q$ of $\Gamma$ has degree $deg_{\epsilon}Q=r+1$ then there is the additional edge $Q{\widetilde{Q}}$ in the graph ${\Gamma}^{\mathcal{S}}$.

Whenever the the image of $\Gamma^{\mathcal{S}}$ in the connection graph for $\mathcal{S}_{P}$ is not isomorphic to the standard spin-graph $\mathcal{S}(3)$ it will be called a decorated cell of the connection graph and it will be denoted as $\Gamma^{\mathcal{S}}$ as well.

 Let us notice that  we may have a situation that some of the decorated $3$-cells $\Gamma^{\mathcal{S}}$ (but not all) may be   isomorphic to the standard spin graph $\mathcal{S}(3)$  (i.e.  no pair of conjugate vertices of $\Gamma$ is connected in $\mathcal{S}_{P}$). So, any standard $3$-cell ${\Gamma}\subset{\mathcal{S}_{P}}$ corresponds to a unique decorating $3$-cell ${\Gamma}^{\mathcal{S}}$ which is isomorphic either to the standard spin graph $\mathcal{S}(3)$ on a surface of genus $3$ or it is isomorphic to an exactly one of the connection graphs  (of an exceptional spin graph of degree $r=3$) considered in the previous section.
 
 In a standard case any face $F$ determines a unique standard $3$-cell of the graph. However, when a graph is an exceptional one it is no longer a case.  More precisely, depending on a type of a face $F\subset{\mathcal{S}_{P}}$, we may distinguish the following possibilities:
 \begin{itemize}
 \item  The set $\{F\}$ of vertices does not contain  a pair of conjugate points (equivalently,  $F$ is a face of the basic graph $\mathcal{S}(r)$).  This implies that  $F$ determines a unique standard $3$-cell $\Gamma$ which may or may not be decorated.
 \item  $F$ is not a face of the basic graph $\mathcal{S}{r}\subset{\mathcal{S}_{P}}$ but it contains only one pair of mutually conjugate vertices. In this case, in the basic graph for $\mathcal{S}_{P}$,  there are $r-2$ distict standard $3$-cells   wich contain all vertices of $F$. Each of this cells determines a unique, necessarily decorated $3$-cell in the connection graph for $\mathcal{S}_{P}$.
 \item The vertices of $F$ consist of two pairs of conjugate points. Now the number of distict standard $3$-cells whose set of vertices contains $\{F\}$ is equal to $(r+1-2)(r+1-3)=(r-1)(r-2)$. Each of such cells determines a unique, decorated $3$-cell in the connection graph.
 \end{itemize}
The occurence of different types of  faces in the connection graph for $\mathcal{S}_{P}$ tells us that we should redefine the notion of a spin-chain $\mathsf{W}_{Q}$ at $Q\in{\{\mathcal{S}_{P}\}}$. 

So, let ${\mathsf{L}_{Q}}=QQ_{1}Q_{2}{\ldots}Q_{N}Q$ be any loop at $Q$ in an exceptional spin graph $\mathcal{S}_{P}$ of degree $r>3$.  Now, for each edge of this loop we must choose a standard $3$-cell ${\Gamma}\subset{\mathcal{S}(r)}$ and then a face of the unique (decorated) cell $\Gamma^{\mathcal{S}}$ along this edge. In other words we will define a spin chain $\mathsf{W}_{Q}$ at $Q$ as follows:
\begin{definition}
\begin{equation}
{\mathsf{W}_{Q}}=({{\mathsf{L}_{Q}}=QQ_{1}{\ldots}Q_{N}Q}; {\Gamma}_{1},\ldots,{\Gamma}_{N},{\Gamma}_{N+1}; F_{1},\ldots,F_{N},F_{N+1})
\end{equation}
\begin{equation*}
 {QQ_{1}}\subset{F_{1}}\subset{\Gamma_{1}}^{\mathcal{S}},\quad {Q_{l}Q_{l+1}}\subset{F_{l}}\subset{\Gamma_{l}}^{\mathcal{S}}, l=1,\ldots,N; \quad \text{and}\quad {Q_{N}Q}\subset{F_{N+1}}\subset{\Gamma_{N+1}^{\mathcal{S}}}
\end{equation*}
\end{definition}
Only when the set of verices $\{F_{l}\}$, ($l=1,\ldots,N+1$), contains no conjugate points (in this case we will say that face $F_{l}$ is a standard one) the face $F_{l}$ implies the unique  choice for  ${\Gamma}^{\mathcal{S}}_{l}$. Hence, when a loop $\mathsf{L}_{Q}$ lies in the basic graph ${\mathcal{S}(r)}\subset{\mathcal{S}_{P}}$ then, similarly as for $r=3$, we introduce basic chains
\begin{definition}
A basic chain at any vertex $Q\in{\{\mathcal{S}_{P}\}}$ is a chain 
\begin{equation*}
{\mathsf{W}_{Q}}=({\mathsf{L}_{Q}};F_{1},\ldots,F_{N+1})
\end{equation*}
 whose loop lies totally in the basic graph $\mathcal{S}(r)$  and whose all faces along the edges of its loop are also faces of the basic graph. 
\end{definition} 
 When a chain $\mathsf{W}_{Q}$ is a basic one then each face along an edge of its loop $\mathsf{L}_{Q}$ determines unique (decorated) $3$-cell and hence there is no need to indicate  cells as in the general definition $10$.

We recall that by choosing the  index  for the points $P_{1},\ldots,P_{r}$ of the set $\{\mathcal{A}_{P}\}$ we determine unique ordering of  the set $\widehat{Q}$ for any vertex $Q$ of $\mathcal{S}_{P}$. In other words, we may identify each set $\widehat{Q}$ either with $\{1,2,\ldots,r\}_{Q}$ when ${deg_{\epsilon}Q}=r$ or with the set $\{1,2,\ldots,r,r+1\}_{Q}$ when the degree of $Q$ is ${deg_{\epsilon}Q}=r+1$.

Let $F$ be a face of some decorated $3$-cell $\Gamma^{\mathcal{S}}$ of the connection graph for $\mathcal{S}_{P}$. Suppose that the set  ${\{F\}}={\{Q_{1},Q_{2},Q_{3},Q_{4}\}}$ of vertices of $F$ contains  at most one pair of conjugate points. This allows us 
  to  define the  $r$-element sets ${\widehat{Q_{l}}}^{\Gamma,F}$  for each $Q_{l}\in{\{F\}}$    as:
\begin{equation}
{\widehat{Q_{l}}}^{\Gamma,F}=
\begin{cases}
{\widehat{Q_{l}}} \quad \text{when}\quad deg_{\epsilon}Q_{l}=r\\
{\widehat{Q_{l}}}-{\frac{\widetilde{Q_{l}}}{Q_{l}}} \quad \text{when} \quad deg_{\epsilon}Q_{l}=r+1 \quad \text{and}\quad F\subset{\Gamma}\\
{\widehat{Q_{l}}}-{\frac{S}{Q_{l}}}\quad \text{when}\quad deg_{\epsilon}Q_{l}=r+1 \quad \text{and  $F$ is  not a standard face}
\end{cases}
\end{equation}
In the latter case point $S$ belongs to the set $\{S,{\widetilde{S}}\}=\{\Gamma\}-{\{{\{F\}}\cup{\widetilde{\{F\}}}\}}$.

 Since all sets $\widehat{Q_{l}}$ are ordered we will introduce the following identifications:
\begin{equation}
{\widehat{Q_{l}}}^{\Gamma,F}\cong
\begin{cases}
{\{1,2,\ldots,r\}}_{Q_{l}} \quad \text{when} \quad deg_{\epsilon}Q_{l}=r\\
{\{j_{1},j_{2},\ldots,j_{r}\}}\subset{\{1,2,\ldots,r,r+1\}_{Q_{l}}} \quad \text{otherwise}
\end{cases}
\end{equation}

First  we will use our decorated cell $\Gamma^{\mathcal{S}}$ and its face $F$  to find one-to-one correspondences between the appropriate $3$-element subsets ${\widehat{Q_{l}}}^{\Gamma}$ of ${\widehat{Q_{l}}}^{\Gamma,F}$ for $l=1,..,4$.   We will denote these subsets  by ${\widehat{Q_{l}}}^{\Gamma}$ since they are totally determined only by the cell $\Gamma^{\mathcal{S}}$.  We have  
\begin{equation}
{\widehat{Q_{l}}}^{\Gamma}\cong
\begin{cases}
{\{i_{1},i_{2},i_{3}\}}\subset{\{1,2,\ldots,r\}_{Q_{l}}}\quad \text{when}\quad deg_{\epsilon}Q_{l}=r\\
{\{i_{1},i_{2},i_{3},i_{4}\}}\subset{\{1,2,\ldots,r,r+1\}_{Q_{l}}} \quad \text{otherwise}
\end{cases}
\end{equation}
Besides, we intoduce the following subsets: 
\begin{equation}
{\widehat{\widehat{Q_{l}}}^{\Gamma}}={{{\widehat{Q_{l}}}^{\Gamma}}\cap{\widehat{Q_{l}}}^{\Gamma,F}}\cong{\{{\overline{1}},{\overline{2}},{\overline{3}}\}^{\Gamma}_{Q_{l}}}; \quad l=1,\ldots,4
\end{equation}

    Since ${\Gamma}^{\mathcal{S}}$ is isomorphic either to the standard graph $\mathcal{S}(3)$ or to the connection graph for some exceptional spin  graph of order $3$, we use [3] or we use the section $4$ above to find an isomorphism      ${\widehat{\widehat{F}}^{\Gamma}_{Q_{l},Q_{j}}}: {\widehat{\widehat{Q}}^{\Gamma}_{l}}\rightarrow{\widehat{\widehat{Q}}^{\Gamma}_{j}}$. According to our notation, this isomorphism can be represented as a permutation 
\begin{equation*}
\sigma^{\Gamma,F}_{Q_{l},Q_{j}}: {\{{\overline{1}},{\overline{2}},{\overline{3}}\}^{\Gamma}_{Q_{l}}}\rightarrow{\{{\overline{1}},{\overline{2}},{\overline{3}}\}^{\Gamma}_{Q_{j}}} 
\end{equation*}

 To find correspondences between the   remaining elements of ${\widehat{Q_{l}}}^{\Gamma,F}$ we proceed as follows: For each ${R}\notin{\{\Gamma\}}$, we will identify $\frac{R}{Q_{l}}$ (or $\frac{\widetilde{R}}{Q_{l}}$, depending which one belongs to $\widehat{Q_{l}}$) with ${\frac{R}{Q_{j}}}$   ( or with $\frac{\widetilde{R}}{Q_{j}}$ depending which one is an element of ${\widehat{Q_{j}}}$). Here  $l,j\in{\{1,2,3,4\}}$ and $\frac{R}{Q_{l}}$ is the shorthand for ${\Phi_{Q_{l}}}(R)\in{\widehat{Q_{l}}}\subset{\Theta_{\epsilon}}$. 
 
 In this way we have constructed bijections  ${\widehat{F}}^{\Gamma}$,  $F\subset{\Gamma}^{\mathcal{S}}$  between all $r$-element sets corresponding to the  vertices of $F$, i.e.
\begin{equation}
{\widehat{F}}^{\Gamma}:  {\widehat{Q_{1}}}^{\Gamma,F}\leftrightarrow{\widehat{Q_{2}}}^{\Gamma,F}\leftrightarrow{\widehat{Q_{3}}}^{\Gamma,F}\leftrightarrow{\widehat{Q_{4}}^{\Gamma,F}}
\end{equation}
 Each isomorphism ${\widehat{\widehat{F}}}^{\Gamma}_{Q_{l},Q_{j}}$ can be obtained by  the restriction of ${\widehat{F}}^{\Gamma}$ to the appropriate subsets.  Moreover, when $deg_{\epsilon}Q_{l}=deg_{\epsilon}Q_{j}=r+1$ for some $l,j\in{\{1,2,3,4\}}$ then ${F}^{\Gamma}$ has natural expansion to the bijection $F^{\Gamma}: {\widehat{Q_{l}}}\leftrightarrow{\widehat{Q_{j}}}$.

Now let us suppose that the set $\{F\}={\{Q_{1},\ldots,Q_{4}\}}$ of verices of $F$ consists of two pairs of conjugate points. This means that each $Q_{l}$ has the degree equal to $r+1$.  In such  case,  the bijections between the $(r+1)$-element sets ${\widehat{Q_{l}}}$, ($l=1,..,4$),  do not depend on a choice of a $3$-cell $\Gamma$ with $F\subset{\Gamma}^{\mathcal{S}}$.  In other words,  for any decorated cell ${\Gamma}^{\mathcal{S}}$ with a face $F$   the produced mappings will be exactly the same. More precisely,  for each $R\notin{\{F\}}$     these bijections 
   identify the elements $\frac{R}{Q_{l}}$ (or ${\frac{\widetilde{R}}{Q_{l}}}$, depending which one belongs to ${\widehat{Q_{l}}}$) for $l=1,2,3,4$ between themselves.   The correspondences between the remaining two elements of each set ${\widehat{Q_{l}}}$ are determined by $F$ in the obvious way. Summarizing, our face $F$ defines natural bijections between $(r+1)$-element sets:
       \begin{equation}
       {\widehat{F}}:  {\widehat{Q_{1}}}\leftrightarrow{\widehat{Q_{2}}}\leftrightarrow{\widehat{Q_{3}}}\leftrightarrow{\widehat{Q_{4}}}
       \end{equation} 
 Now, let $Q$ be any vertex of ${\mathcal{S}_{P}}$ and   let ${\mathsf{W}_{Q}}$ be a chain at $Q$ as defined in $(5.1)$. Suppose that 
 $R_{1}R_{2}$ is an edge of the loop ${\mathsf{L}_{Q}}$ and  suppose that $deg_{\epsilon}R_{1}=deg_{\epsilon}R_{2}=r<g$. This means that 
 for any standard cell $\Gamma$ with an edge $R_{1}R_{2}$ and for any face $F\subset{\Gamma}^{\mathcal{S}}$ along this edge, $F$
  has at most one pair of conjugate points as its vertices. Moreover, we have 
    ${\widehat{R_{k}}}^{\Gamma,F}={\widehat{R_{k}}}\cong{\{1,2,\ldots,r\}_{R_{k}}}$ for $k=1.2$.

             The bijections ${\widehat{F}}^{\Gamma}$  (see $(5.6)$) determine the isomorphism (denoted in the same way)  ${\widehat{F}}^{\Gamma}: {\widehat{R_{1}}}\rightarrow{\widehat{R_{2}}}$ which   can be represented by a unique  permutation
\begin{equation*}
{\sigma}^{F}_{R_{1},R_{2}}: {\{1,2,\ldots,r\}_{R_{1}}}\rightarrow{\{1,2,\ldots,r\}_{R_{2}}}
\end{equation*}  

Now suppose that ${deg_{\epsilon}R_{1}}=r$ and ${deg_{\epsilon}R_{2}}=r+1$.  As in the latter case, this also implies that each face $F$ with an edge $R_{1}R_{2}$ has  at most one pair of conjugate points   in the set of its vertices.   Let  ${\Gamma}\subset{\mathcal{S}(r)}$ be a standard $3$-cell  such that ${F}\subset{\Gamma}^{\mathcal{S}}$.  Since ${\widehat{R}}^{\Gamma}_{2}$ must contain the element $\frac{\widetilde{R_{2}}}{R_{2}}$  we have
\begin{equation*}
{\widehat{R}}^{\Gamma}_{1}\cong{\{{\overline{1}},{\overline{2}},{\overline{3}}\}^{\Gamma}_{R_{1}}}\quad \text{and} \quad {\widehat{R}}^{\Gamma}_{2}\cong{\{{\overline{1}},{\overline{2}},{\overline{3}},{\overline{4}}\}^{\Gamma}_{R_{2}}}
\end{equation*}
as well as the well defined isomorphism
\begin{equation}
{\widehat{\widehat{F}}}^{\Gamma}_{R_{1},R_{2}}: {\widehat{R}}^{\Gamma}_{1}\rightarrow{\widehat{\widehat{R}}}^{\Gamma,F}_{2}
\end{equation}
Here ${\widehat{\widehat{R}}}^{\Gamma,F}_{2}$ is the $3$-element subset  of the set ${\widehat{R}}^{\Gamma}_{2}$ as defined by $(5.5)$ ( i.e. as the intersection  ${\widehat{R_{2}}}^{\Gamma}\cap{\widehat{R_{2}}}^{\Gamma,F}$). Of course, by $(5.6)$ this isomorphism ${\widehat{\widehat{F}}}^{\Gamma}_{R_{1},R_{2}}$ has the natural expansion to the isomorphism $F^{\Gamma}:{\widehat{R_{1}}}\rightarrow{\widehat{R_{2}}}^{\Gamma,F}$.

When the degrees of both vertices $R_{1}$ and $R_{2}$ are equal to r+1 then for any face $F$ with an edge $R_{1}R_{2}$ and for any standard $3$-cell  ${\Gamma}\subset{\mathcal{S}}(r)$ with $F\subset{\Gamma}^{\mathcal{S}}$ we have ${\widehat{R}^{\Gamma}_{k}}\cong{\{{\overline{1}},{\overline{2}},{\overline{3}},{\overline{4}}\}^{\Gamma}_{R_{k}}}$,  $k=1,2$. Since ${\Gamma}^{\mathcal{S}}$ is isomorphic to the connection graph of some exceptional  spin graph of order $3$, we easily obtain (using  the section $4$) the isomorphism ${\widehat{F}}^{\Gamma}_{R_{1},R_{2}}: {{\widehat{R}}^{\Gamma}_{1}}\rightarrow{{\widehat{R}}^{\Gamma}_{2}}$ and then its natural expansion 
\begin{equation}
{\widehat{F}}: {\widehat{R_{1}}}\rightarrow{\widehat{R_{2}}}
\end{equation}

Moreover, when our face $F$ has at most one pair of conjugate points as its vertices then $F$ defines the $3$-element subsets ${\widehat{\widehat{R}}}^{\Gamma,F}_{k}$ of ${\widehat{R}}^{\Gamma}_{k}\subset{\widehat{R_{k}}}$  as well as the $r$-elements subsets ${\widehat{R_{k}}}^{\Gamma,F}$ of the sets $\widehat{R_{k}}$  respectively, $k=1,2$. Obviously, the restrictions of the isomorphism $(5.9)$ to the appropriate subsets produce the isomorphism between these, introduced earlier,  subsets.

\begin{definition}\hfill
Let  us consider  a spin chain ${\mathsf{W}_{Q}}$  at $Q$  of the form given by $(5.1)$.
\begin{itemize}
\item When all vertces of the loop $\mathsf{L}_{Q}$ have the same degree (equal either to $r$ or to $r+1$ respectively) then all spin chains $(5.1)$ are admissible.
\item When  $\mathsf{L}_{Q}$ has vertices with different   $\epsilon$-degrees  then $\mathsf{W}_{Q}$ is admissible if all compositions of isomorphisms ${\widehat{F_{l}}}^{\Gamma_{l}}$ restricted to appropriate $r$-element sets for $l=1,\ldots,N+1$, are well defined 
\end{itemize}
\end{definition}

To illustrate the situation we will consider an exceptional spin graph $\mathcal{S}_{P}$ of order $r=4<g$ with a head $P$. The basic graph $\mathcal{S}(4)$ for $\mathcal{S}_{P}$ has $5$ different $3$-cells and hence there are $5$ decorated $3$-cells $\Gamma^{\mathcal{S}}_{k}\subset{\mathcal{S}_{P}}$. Let $\Gamma_{k}$ denote a cell with the vertices ${\{\Gamma_{k}\}}={\{\mathcal{S}_{P}\}}-{\{P_{k},{\widetilde{P_{k}}}\}}$ for $k=1,2,3,4$ and let the set of vertices of the cell $\Gamma_{5}$ be ${\{\Gamma_{5}\}}={\{P_{j},{\widetilde{P_{j}}}; j=1,\ldots,4\}}$.

$\mathbf{EXAMPLE}$ $\mathbf{I}$. Suppose that in $\mathcal{S}_{P}$ only one pair, $P_{4},\widetilde{P_{4}}$ of conjugate vertices is connected. In this case all cells $\Gamma^{\mathcal{S}}_{k}$  for $k\neq{4}$ are decorated but the cell $\Gamma^{\mathcal{S}}_{4}$ is isomorphic to the standard spin graph on a surface of genus $3$. Let us consider the following  chain at $P$:
\begin{equation}
{\mathsf{W}}_{P}: P\stackrel{\Gamma_{1},F_{1}}{\rightarrow}P_{4}\stackrel{\Gamma_{3},F_{2}}{\rightarrow}{\widetilde{P_{4}}}\stackrel{\Gamma_{5},F_{3}}{\rightarrow}P_{2}\stackrel{\Gamma_{1},F_{4}}{\rightarrow}P
\end{equation}
with
\begin{equation*}
F_{1}=PP_{2}{\widetilde{P_{4}}}P_{4}\subset{\Gamma_{1}},\quad F_{2}=PP_{1}{\widetilde{P_{4}}}P_{4}\subset{\Gamma_{3}}, \quad F_{3}={\widetilde{P_{1}}}P_{2}{\widetilde{P_{4}}}P_{4}\subset{\Gamma_{5}},\quad F_{4}=F_{1}
\end{equation*}
Since all compositions of isomorphisms between appropriate $4$ element sets are well defined, this chain is admissible.  It determines the isomorphism of the set ${\widehat{P}}\cong{\{1,2,3,4\}_{P}}$ which can be represented by the permutation ${\sigma^{\mathsf{W}_{P}}}=(143)\in{\mathtt{S}_{4}}(\widehat{P})$. Another chain at $P$ given as 
\begin{equation}
{\mathsf{W}}_{P}; P\stackrel{\Gamma_{3},F_{1}}{\rightarrow}P_{1}\stackrel{\Gamma_{2},F_{2}}{\rightarrow}P
\end{equation}
with $F_{1}=PP_{1}{\widetilde{P_{4}}}P_{4}\subset{\Gamma_{3}}$ and with $F_{2}=PP_{1}{\widetilde{P_{3}}}P_{4}\subset{\Gamma_{2}}$ is admissible as well and produces the isomorphism of ${\widehat{P}}$ represented by the permutation ${\sigma^{\mathsf{W}_{P}}}=(13)\in{\mathtt{S}_{4}}(\widehat{P})$.

On the other hand the chain
\begin{equation}
{\mathsf{W}_{P}}: P\stackrel{\Gamma_{1},F_{1}}{\rightarrow}P_{4}\stackrel{\Gamma_{2},F_{2}}{\rightarrow}{\widetilde{P_{3}}}\stackrel{\Gamma_{5},F_{3}}{\rightarrow}P_{1}\stackrel{\Gamma_{3},F_{4}}{\rightarrow}P
\end{equation}
with $F_{1}=PP_{2}{\widetilde{P_{4}}}P_{4}\subset{\Gamma_{1}}$, $F_{2}=P_{4}{\widetilde{P_{3}}}{\widetilde{P}}{\widetilde{P_{1}}}\subset{\Gamma_{2}}$, $F_{3}={\widetilde{P_{3}}}P_{1}P_{2}{\widetilde{P_{4}}}\subset{\Gamma_{5}}$ and with $F_{4}=PP_{1}{\widetilde{P_{4}}}P_{4}\subset{\Gamma_{3}}$ is not an admissible chain at $P$. The reason for this is that the composition ${\widehat{F_{2}}}^{\Gamma_{2}}\circ{\widehat{F_{1}}}^{\Gamma_{1}}$ is not defined. We assume that any such not admissible chain at a vertex $Q$ does not move the elements of $\widehat{Q}$ at all.

$\mathbf{EXAMPLE}$ $\mathbf{II}$. Assume that   spin graph $\mathcal{S}_{P}$ has two pairs of vertices, $\{P_{4},{\widetilde{P_{4}}}\}$ and $\{P_{3},{\widetilde{P_{3}}}\}$  connected. This implies that all $3$-cells are decorated.   The $\epsilon$-degrees of $P$,$P_{1}$ and $P_{2}$ are equal to $4$ but the degrees of $P_{k}$ for $k=3,4$ are equal to $5$. Now, ${\Gamma_{3}}^{\mathcal{S}}$ and ${\Gamma_{4}}^{\mathcal{S}}$ are isomorphic to the standard cell with only one additional edge (connecting appropriate conjugate points), whereas the remaining decorated cells of the connection graph for $\mathcal{S}_{P}$, have two additional edges each. Let us consider a chain at $P_{2}$ given by
\begin{equation}
{\mathsf{W}}_{P_{2}}:  P_{2}\stackrel{\Gamma_{4},F_{1}}{\rightarrow}{\widetilde{P_{3}}}\stackrel{\Gamma_{2},F_{2}}{\rightarrow}P_{4}\stackrel{\Gamma_{5},F_{3}}{\rightarrow}{\widetilde{P_{3}}}\stackrel{\Gamma_{5},F_{4}}{\rightarrow}P_{2}
\end{equation}
 where $F_{1}=PP_{2}{\widetilde{P_{3}}}P_{3}\subset{\Gamma_{4}}$, $F_{2}=P_{3}{\widetilde{P_{4}}}P_{4}{\widetilde{P_{3}}}\subset{\Gamma_{2}}$,  $F_{3}=P_{4}{\widetilde{P_{3}}}P_{3}{\widetilde{P_{2}}}\subset{\Gamma_{5}}$ and $F_{4}={\widetilde{P_{3}}}P_{2}{\widetilde{P_{4}}}P_{3}\subset{\Gamma_{5}}$. Since we have 
 \begin{equation*} {\widehat{\widetilde{P_{3}}}}^{F_{3}}={\widehat{F_{3}}}^{\Gamma_{5}}\circ{\widehat{F_{2}}}^{\Gamma_{2}}\circ{\widehat{F_{1}}}^{\Gamma_{4}}({\widehat{P_{2}}})={\widehat{\widetilde{P_{3}}}}^{F_{4}}
\end{equation*}
this chain is admissible and it produces an isomorphism of $\widehat{P_{2}}$ corresponding to the odd permutation $\sigma^{\mathsf{W}_{P_{2}}}={(12)}\in{\mathtt{S}_{4}}(\widehat{P_{2}})$. 
\begin{lemma}
Let $\mathcal{S}_{P}$ be an exceptional spin graph of degree $r\geq{4}$ with a head $P$. Let $Q$ be any vertex of $\mathcal{S}_{P}$. The set of all admissible chains at $Q$ produces the spin group $\mathtt{G}_{Q}$ at $Q$ which is either isomorphic to the symmetry group $\mathtt{S}_{r}$, when $deg_{\epsilon}Q=r$ or it is isomorphic to the group $\mathtt{S}_{r+1}$, when $deg_{\epsilon}Q=r+1$.
\end{lemma}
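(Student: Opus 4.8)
The plan is to identify $\mathtt{G}_{Q}$ with a subgroup of the symmetric group $\mathtt{S}_{deg_{\epsilon}Q}(\widehat{Q})$, which is automatic since every admissible chain permutes the ordered set $\widehat{Q}$ of cardinality $deg_{\epsilon}Q$, and then to prove that the admissible chains in fact generate the \emph{whole} symmetric group. The backbone is supplied by the basic chains: since the basic graph $\mathcal{S}(r)\subset{\mathcal{S}_{P}}$ is isomorphic to the standard genus-$r$ graph, the same computation as in the standard case (and as carried out for $r=3$ in Section~4) shows that the basic chains at $Q$ produce a subgroup isomorphic to the alternating group $\mathtt{A}_{r}$ acting on the $r$-element subset $\widehat{\widehat{Q}}\subseteq{\widehat{Q}}$. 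Since $\mathtt{A}_{n}$ has index $2$ in $\mathtt{S}_{n}$, it will suffice to adjoin to this alternating group a single suitable odd generator coming from a decorated cell.

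The key reduction, valid because $r>3$, is that any chain whose loop and whose faces all lie inside one decorated $3$-cell $\Gamma^{\mathcal{S}}$ behaves exactly like an order-$3$ chain. Concretely, I would check that the $r$-element bijections $\widehat{F}^{\Gamma}$ of $(5.6)$ restrict on the distinguished $3$-element subsets $\widehat{\widehat{Q_{l}}}^{\Gamma}$ of $(5.5)$ to the order-$3$ maps $\widehat{\widehat{F}}^{\Gamma}_{Q_{l},Q_{j}}$, while pairing the remaining elements $\frac{R}{Q_{l}}$ with $\frac{R}{Q_{j}}$ for $R\notin{\{\Gamma\}}$. Consequently a chain confined to $\Gamma^{\mathcal{S}}$ is $\epsilon$-admissible in $\mathcal{S}_{P}$ exactly when it is admissible as an order-$3$ chain, and the permutation it induces on $\widehat{Q}$ is the one furnished by Lemma~2 on $\widehat{\widehat{Q}}^{\Gamma}$ (or on $\widehat{Q}^{\Gamma}$ when $Q$ has local degree $4$), extended by the identity on the complementary elements. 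In particular, whenever $\Gamma^{\mathcal{S}}$ is an exceptional order-$3$ cell, Lemma~2 guarantees the full local symmetric group, and hence transpositions, on these subsets.

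For a vertex with $deg_{\epsilon}Q=r$ I would pick any connected conjugate pair $\{S,\widetilde{S}\}$ of $\mathcal{S}_{P}$ (one exists because the graph is exceptional) and a standard $3$-cell $\Gamma\subset{\mathcal{S}(r)}$ containing $Q$, $S$ and $\widetilde{S}$; then $\Gamma^{\mathcal{S}}$ is exceptional, $Q$ has local degree $3$ in it, and by the reduction above an admissible chain inside $\Gamma^{\mathcal{S}}$ realizes a transposition of two elements of $\widehat{\widehat{Q}}^{\Gamma}\subset{\widehat{Q}}$. As an element of $\mathtt{S}_{r}(\widehat{Q})$ this is odd, so together with $\mathtt{A}_{r}$ it generates all of $\mathtt{S}_{r}(\widehat{Q})$ and $\mathtt{G}_{Q}\cong{\mathtt{S}_{r}}$. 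For a vertex with $deg_{\epsilon}Q=r+1$ the edge $Q\widetilde{Q}$ is present; I would choose a standard $3$-cell $\Gamma$ containing both $Q$ and $\widetilde{Q}$, so that $Q$ has local degree $4$ in the (necessarily exceptional) cell $\Gamma^{\mathcal{S}}$. Lemma~2 then yields an admissible chain inducing a transposition $\tau=(\frac{\widetilde{Q}}{Q},x)$ with $x\in{\widehat{\widehat{Q}}}$, while the basic chains give $\mathtt{A}_{r}$ fixing the extra point $\frac{\widetilde{Q}}{Q}$. Conjugating a $3$-cycle $(x,y,z)\in{\mathtt{A}_{r}}$ by $\tau$ produces the $3$-cycle $(\frac{\widetilde{Q}}{Q},y,z)$; these together with all $3$-cycles on $\widehat{\widehat{Q}}$ generate the transitive group $\mathtt{A}_{r+1}$, and adjoining the odd transposition $\tau$ yields $\mathtt{S}_{r+1}$, so $\mathtt{G}_{Q}\cong{\mathtt{S}_{r+1}}$.

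The main obstacle is the verification underlying the second paragraph together with the combinatorial selection made in the third. First, one must reconcile the ambient admissibility conditions $\widehat{Q_{l}}^{\Gamma_{l},F_{l}}=\widehat{Q_{l}}^{\Gamma_{l+1},F_{l+1}}$ of Definition~12 with the order-$3$ admissibility of Remarks~1 and~2, so that a chain confined to $\Gamma^{\mathcal{S}}$ is admissible in $\mathcal{S}_{P}$ precisely when it is admissible inside the cell and induces the identical permutation; this is where the bookkeeping of the subsets $\widehat{Q_{l}}^{\Gamma,F}$, $\widehat{Q_{l}}^{\Gamma}$ and $\widehat{\widehat{Q_{l}}}^{\Gamma}$ must be handled with care. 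Second, one must show, using only the symmetry of the standard genus-$r$ basic graph, that through every vertex $Q$ there passes a standard $3$-cell containing a prescribed connected conjugate pair (when $deg_{\epsilon}Q=r$) or containing both $Q$ and $\widetilde{Q}$ (when $deg_{\epsilon}Q=r+1$); the Example of order $r=4$ above exhibits such cells explicitly, and the general case should follow from the transitivity of $\mathcal{S}(r)$ on its configurations of cells.
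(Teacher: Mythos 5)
Your proposal follows essentially the same route as the paper: basic chains supply the alternating group $\mathtt{A}_{r}$ acting on $\widehat{\widehat{Q}}$, chains confined to a decorated $3$-cell are reduced via Lemma~2 to odd permutations extended by the identity, and the two together generate $\mathtt{S}_{r}$ or $\mathtt{S}_{r+1}$ according to the $\epsilon$-degree of $Q$. The only difference is that you spell out details the paper leaves implicit, namely the explicit conjugation argument producing the $3$-cycles through $\frac{\widetilde{Q}}{Q}$ that generate $\mathtt{S}_{r+1}$, and the selection of a standard $3$-cell through $Q$ containing a connected conjugate pair.
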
 
\begin{proof}
It is obvious that the composition of any two admissible spin chains at a vertex $Q\in{\{\mathcal{S}_{P}\}}$ is again an admissible chain at $Q$. Moreover, for any admissible spin chain $\mathsf{W}_{Q}$  the traveling of its loop $\mathsf{L}_{Q}$ in  two opposite directions produce   permutations of the set $\widehat{Q}$ which are inverse to each other. This means that the set of all isomorphisms of the set $\widehat{Q}$ produced by the set of all admissible chains at $Q$ form a group which  will be  denoted  by $\mathtt{G}_{Q}$.

We already know that the set of all basic chains at $Q$ produces a subgroup of $\mathtt{G}_{Q}$ which is isomorphic to the alternating group $\mathtt{A}_{r}$ acting on the set ${\widehat{Q}}$ when $deg_{\epsilon}Q=r$ or acting on the set ${\widehat{Q}}-{\{\frac{\widetilde{Q}}{Q}\}}$ when $deg_{\epsilon}Q={r+1}$.

  Now let us suppose that  $\mathsf{W}_{Q}$ is any admissible chain at a vertex $Q$  whose loop $\mathsf{L_{Q}}$   lies totally in  some decorated cell $\Gamma^{\mathcal{S}}$. Suppose that  all faces along the edges of $\mathsf{L}_{Q}$  are also in $\Gamma^{\mathcal{S}}$.  We already know (from the section $(4)$)    that the chain $\mathsf{W}_{Q_{k}}$ produces a permutation $\sigma^{\mathsf{W}_{Q}}$. More precisely, when $deg_{\epsilon}Q=r$ then this permutation belongs to the symmetry group $\mathtt{S}_{3}$ acting on the set $\{{\overline{1}},{\overline{2}},{\overline{3}}\}^{\Gamma}_{Q}$ and   can be obviously extended to (denoted in the same way) 
  \begin{equation}
  {\sigma^{\mathsf{W}_{Q}}: {\{1,2,\ldots,r\}}_{Q}\rightarrow{\{1,2,\ldots,r\}}_{Q}}
  \end{equation}
When the degree of $Q$ is equal to $r+1$ then ${\sigma}^{\mathsf{W}_{Q}}$ belongs to the symmetry group $\mathtt{S}_{4}$ acting on the set $\{{\overline{1}},{\overline{2}},{\overline{3}},{\overline{4}}\}^{\Gamma}_{Q}$ and extends uniquely  to the permutation
\begin{equation}
\sigma^{\mathsf{W}_{Q}}:{\{1,2,\ldots,r+1\}_{Q}}\rightarrow{\{1,2,\ldots,r+1\}_{Q}}
\end{equation}
Suppose that the degree of a vertex $Q$ is equal to $r$. Since any vertex of an exceptional spin graph $\mathcal{S}_{P}$ must  also be  a vertex of some decorated cell ${\Gamma^{\mathcal{S}}}\neq{\Gamma}$  we will have some odd permutation $(5.14)$ that is produced by an appropriate chain $\mathsf{W}_{Q}$ in ${\Gamma}^{\mathcal{S}}$.  Since each odd permutation in $\mathtt{S}_{r}$ can be obtained as a product of any fixed odd permutation $\sigma\in{\mathtt{S}_{r}}$  and some element of the alternating subgroup ${\mathtt{A}_{r}}\triangleleft{\mathtt{S}_{r}}$, we obtain immediately that the spin group at $Q$ is ${\mathtt{G}_{Q}}\cong{\mathtt{S}_{r}}$.

Suppose that $deg_{\epsilon}Q={r+1}$. In this case each $3$-cell with a vertex $Q$ must be decorated i.e. each ${\Gamma^{\mathcal{S}}}\neq{\Gamma}$. Since any of such cells produce even and  odd permutations $(5.15)$ it is easy to see that these permutation together with $\mathtt{A}_{r}$ (acting on the r-element set ${\widehat{Q}}-{\{\frac{\widetilde{Q}}{Q}\}}$) suply all generators of the full symmetry group $\mathtt{S}_{r+1}$ acting on the set $\widehat{Q}$.  This completes our proof.
 \end{proof} 
 
 \section{SUMMARY}

 Any even nonsingular spin bundle $\xi_{\epsilon}$ on a hyperelliptic Riemann surface $\Sigma$ determines a foliation of $\Sigma$. All leaves of this foliation are finite and any leaf carry an additional structure of a spin graph. Almost all leaves have $2g+2$ points that form vertices of standard spin graph which has to belong to the unique isomorphism class $\mathcal{S}(g)$ of graphs  depending only on the genus $g$ of a surface.  Besides there are two leaves  through the Weierstrass points.  Each of them has $g+1$ points that form vertices of a  Weierstrass spin graph.  At every point of a standard leaf and at each point of a Weierstrass leaf the spin groups are isomorphic to each other and they all are isomorphic to the alternating group $\mathtt{A}_{g}$. At exceptional poins these groups are different. They number as well as they isomorhic classes are given by the presence of a concrete number of exceptional spin graphs in any possible class of isomorphic graphs.  

 The exceptional spin groups attached to the vertices of an exceptional spin graph, say $\mathcal{S}_{P}$, are determined only by the connection graph for $\mathcal{S}_{P}$  and  we have seen that   non-isomorphic exceptional graphs may have exactly the same connection graph. Hence, the classification of hyperelliptic Riemann surfaces using exceptional spin groups  is different that the classification determined only by the exceptional spin graphs.
 
 Since any surface  $\Sigma$  must carry exceptional spin graphs ~\cite{KB13}, we must have some finite number of exceptional spin groups  (which have been found in this paper)  attached to exceptional points of $\Sigma$.
  These exceptional groups are obstacles for construction of a principal bundle over $\Sigma$ with the strucrural group given by $\mathtt{A}_{g}$. In other words, on any hyperelliptic Riemann surface equipped with a nonsingular even spin structure, 
    we have some sort of a  'singular principal' $\mathtt{A}_{g}$-bundle.

\end{document}